\definecolor{hot}{RGB}{65,105,225} 
\theoremstyle{definition} 
\newtheorem{thm}{Theorem}[section]
\newtheorem{conj}[thm]{Conjecture}
\theoremstyle{definition}
\newtheorem{example}[thm]{Example}
\newtheorem{lem}[thm]{Lemma}
\newtheorem{prop}[thm]{Proposition}
\newtheorem{cor}[thm]{Corollary}
\newtheorem{defn}[thm]{Definition}
\newtheorem{question}[thm]{Question}
\newtheorem{rem}[thm]{Remark}
\numberwithin{equation}{section}
\newcommand{\Dpp}{\mathcal{D}}
\newcommand{\pmOne}{{\{-1,+1\}}} 
\newcommand{\cij}{{c}}
\newcommand{\ck}{{c}}
\newcommand{\RR}{\mathbb{R}}
\newcommand{\CC}{\mathbb{C}}
\newcommand{\sym}{{\mathrm{Sym}}}
\newcommand{\symnr}{{\mathrm{Sym}_n(\mathbb{R})}}
\DeclareMathOperator{\Diag}{Diag}
\DeclareMathOperator{\sign}{sign}
\DeclareMathOperator{\Trace}{Trace}
\DeclareMathOperator{\rank}{rank}
\title{Invariants of SDP exactness in quadratic programming}
\author[Lindberg]{Julia Lindberg}
\address{
Julia Lindberg \\
University of Texas-Austin \\ Austin, TX USA
}
\email{julia.lindberg@math.utexas.edu}
\urladdr{\url{https://sites.google.com/view/julialindberg/home}}
\author[Rodriguez]{Jose Israel Rodriguez}
\address{
Jose Israel Rodriguez\\
University of Wisconsin-Madison \\
 Madison, WI USA
} 
\email{jose@math.wisc.edu}
\urladdr{\url{https://sites.google.com/wisc.edu/jose/home}}
\begin{document}

\maketitle

\begin{abstract}
In this paper we study the Shor relaxation of quadratic programs by fixing a feasible set and considering the space of objective functions for which the Shor relaxation is exact. We first give conditions under which this region is invariant under the choice of generators defining the feasible set. We then describe this region when the feasible set is invariant under the action of a subgroup of 
the general linear group. 
We conclude by applying these results to quadratic binary programs. We give an explicit description of objective functions where the Shor relaxation is exact and use this knowledge to design an algorithm that produces candidate solutions for binary quadratic programs.
\end{abstract}

\section{Introduction}

Quadratically constrained quadratic programs (QCQPs) are a broad class of polynomial  optimization problems that seek to minimize a quadratic objective function subject to quadratic equality constraints. Specifically, these are problems of the form:
\begin{align}
\min_{x \in \mathbb{R}^n} \ f_0(x) \ \quad \text{subject to} \ \quad f_i(x) = 0, \ i \in [m] \label{eq:1}
\end{align}
where $f_0,\ldots,f_m \in \mathbb{R}[x_1,\ldots,x_n]_{\leq 2} :=\{ f\in \RR[x_1,\dots x_n] : \deg(f)\leq 2 \}$ and $[m]:=\{1,\ldots,m\}$. Using matrix notation to write $f_0(x) = x^TCx + 2 c^T x$ and $f_i(x) = x^T A_i x + 2 a_i^T x + \alpha_i$, $i \in [m]$,
where $C, A_i \in \text{Sym}_n(\mathbb{R})$ are symmetric $n \times n $ matrices, $c, a_i \in \mathbb{R}^n$ and $\alpha_i \in \mathbb{R}$.
We consider optimization problems of the form: 
\begin{align*}
    \min_{x \in \mathbb{R}^n} \ x^T C x + 2c^T x \quad \text{subject to} \quad x^T A_i x + 2a_i^T x + \alpha_i = 0, \ i \in [m]. \tag{QCQP} \label{QP}
\end{align*}

QCQPs have broad modelling power and have found applications in signal processing, combinatorial optimization, power systems engineering and more \cite{tan2001the,khabbazibasmenj2014generalized, poljak1995a, lee2011mixed, molzahn2019a, zhong2013dynamic, papaspiliotopoulos2017a}. 
In general, these problems are NP hard to solve \cite{vavasis1990quadratic} but a convex relaxation defined by a \textit{semidefinite program (SDP)}  gives an outer relaxation of \eqref{QP}. This relaxation, called the \textit{Shor relaxation} \cite{shor1987quadratic}, lifts the optimization variable $x \in \mathbb{R}^n$ to $\binom{n+1}{2}$-dimensional space by considering the optimization variable $X \in \text{Sym}_{n+1}(\mathbb{R})$. A detailed derivation of this relaxation is given in \cite{luo2010semidefinite} but we outline the idea below.

 Consider $x = (x_1,\ldots,x_n) \in \mathbb{R}^{n} $. The key observation is that for any $A \in \symnr$,
 $$x^T A x = \Trace(x^T A x) = \Trace(Axx^T).$$
Therefore, applying this technique to $x = (1,x_1,\ldots,x_n)$ and observing that in this case 
\[
X = x x^T = \begin{bmatrix}
    1 & x_1 & x_2  & \cdots & x_n \\
    x_1 & x_1^2 & x_1x_2  & \cdots & x_1 x_n \\
    \vdots & & \ddots  &  & \vdots \\
    x_{n-1} & x_1 x_{n-1} & x_2 x_{n-1}  & \cdots & x_{n-1}x_n \\
    x_n & x_1 x_n & x_2 x_n  & \cdots & x_n^2
\end{bmatrix},
\]
we define
\[ 
\mathcal{C} := \begin{bmatrix} 0 & c^T \\ c & C \end{bmatrix}, \quad \mathcal{A}_i = \begin{bmatrix} \alpha_i & a_i^T \\ a_i & A_i \end{bmatrix} \quad \text{and} \quad \mathcal{A}_0 = \begin{bmatrix} 1 & 0_{1 \times n} \\ 0_{n \times 1} & 0_{n \times n} \end{bmatrix},
\] 
to produce a mathematically equivalent formulation of \eqref{QP} as:
\begin{align*}
    \min_{X \succeq 0} \ \langle \mathcal{C},X \rangle \quad \text{subject to} \quad \langle \mathcal{A}_i,X \rangle &= 0, \ i \in [m] \\
    \langle \mathcal{A}_0,X \rangle &= 1 \\
    \text{rank}(X) &=1,
\end{align*}
where $\langle X, Y \rangle = \Trace(X \cdot Y)$ denotes the trace.

The key observation is that for any $A\in \sym_{n+1}(\mathbb{R})$ and rank one matrix $V\in \sym_{n+1}(\mathbb{R})$, 
there exists $v\in \mathbb{R}^{n+1}$ such that 
$V=vv^T$ and 
 \[v^T A v = \Trace(v^T A v) = \Trace(AV).
 \]
To apply this idea to our situation, we take $v=(1,x_1,\dots,x_n)$
and we define 
\[ 
\mathcal{C} := \begin{bmatrix} 0 & c^T \\ c & C \end{bmatrix}, \quad \mathcal{A}_i = \begin{bmatrix} \alpha_i & a_i^T \\ a_i & A_i \end{bmatrix} \quad \text{and} \quad \mathcal{A}_0 = \begin{bmatrix} 1 & 0_{1 \times n} \\ 0_{n \times 1} & 0_{n \times n} \end{bmatrix}.
\] 
Observing 
that in this case 
\[
X = v v^T = \begin{bmatrix}
    1 & x_1 & x_2  & \cdots & x_n \\
    x_1 & x_1^2 & x_1x_2  & \cdots & x_1 x_n \\
    \vdots & & \ddots  &  & \vdots \\
    x_{n-1} & x_1 x_{n-1} & x_2 x_{n-1}  & \cdots & x_{n-1}x_n \\
    x_n & x_1 x_n & x_2 x_n  & \cdots & x_n^2
\end{bmatrix} 
\]
we produce a mathematically equivalent formulation of \eqref{QP} as
\begin{align*}
    \min_{X \succeq 0} \ \langle \mathcal{C},X \rangle \quad \text{subject to} \quad \langle \mathcal{A}_i,X \rangle &= 0, \ i \in [m] \\
    \langle \mathcal{A}_0,X \rangle &= 1 \\
    \text{rank}(X) &=1,
\end{align*}
where $\langle X, Y \rangle = \Trace(X \cdot Y)$ denotes the trace.

Observe that our original optimization variables were $x_1,\ldots,x_n$ but in the relaxation, the optimization variable is the $(n+1) \times (n+1)$ matrix $X = xx^T$ where $x = (1,x_1,\ldots,x_n)$. The rank constraint is nonconvex, but it is the only nonconvex constraint. The Shor relaxation is then given by removing this nonconvex constraint. Specifically, it is defined as:
\begin{align*}
    \min_{X \succeq 0} \ \langle \mathcal{C},X \rangle \quad \text{subject to} \quad \langle \mathcal{A}_i,X \rangle &= 0, \ i \in [m] \\
    \langle \mathcal{A}_0,X \rangle &= 0. \tag{QCQP-Relax} \label{QP-Relax}
\end{align*}
If an optimal solution, $X^*$, to \eqref{QP-Relax} is rank $1$, the \emph{relaxation is exact}, as we can write $X^* = x^* x^*{^T}$ and $x^*$ is an optimal solution to \eqref{QP}.

The Shor relaxation gives the first iteration in what is known as the moment/sums of squares hierarchy \cite{parrilo2003semidefinite,lasserre2000global}. This hierarchy gives a series of SDP relaxations to \eqref{QP} that generically converges to its global optimum \cite{nie2014optimality}. The main issue is that the size of the SDP increases at an exponential rate. Consequently, for even small size problems it is not computationally tractable to go beyond the first iteration of this hierarchy. Therefore, a natural question is to ask when \eqref{QP-Relax} is exact.

\begin{question}\label{ques:sdp_exact}
What are necessary and sufficient conditions on $\mathcal{C}, \mathcal{A}_i$ for $i \in [m]$ such that \eqref{QP-Relax} is exact?
\end{question}

 Some partial answers have been given to \Cref{ques:sdp_exact}. One of the first results in this area showed that if $m = 1$ (i.e. there is a single equality constraint) then \eqref{QP-Relax} is exact \cite{flippo1996duality}. Later work extended this by considering \eqref{QP} with two quadratic constraints \cite{LOCATELLI2015126, ugur2009convex}.
Other work in this area considers when the feasible region of \eqref{QP} is defined by inequality constraints\footnote{Observe that the assumption in this paper of equality constraints can encompass inequalities by adding slack variables.}. For instance if all matrices $C, A_1,\ldots,A_n \succeq 0$ then \eqref{QP} is convex and \eqref{QP-Relax} is exact \cite{fujie1997semidefinite}. Another condition, namely if the off-diagonal entries of $C, A_i$, $i \in [m]$ are non-positive, then \eqref{QP-Relax} is exact \cite{kim2003exact}. Other work has considered when $C, A_i$ are sparse. In \cite{sojoudi2014exactness} the authors map the structure of \eqref{QP} to a generalized weighted graph and show that if each weight set is sign-definite and conditions on each cycle of the graph are satisfied, then the relaxation is exact. 
Other work has been able to bound the rank of the optimal solution to \eqref{QP-Relax} by using a pre-processing step done in polynomial time that is dependent on the data $\mathcal{C},\mathcal{A}_i$, $i \in [m]$ \cite{burer2020exact}. A recent line of work gives sufficient conditions on both objective value and convex hull exactness (the condition that the
convex hull of the epigraph of \eqref{QP} coincides with the epigraph of \eqref{QP-Relax}) \cite{wang2021geometric, wang2020on, wang2022tightness}.

For fixed $\mathcal{A}_i$, $i \in [m]$, we are interested in studying the space of objective functions where \eqref{QP-Relax} is exact. This direction of research is inspired by the work in \cite{cifuentes2017on,cifuentes2020the} where, using strict complementarity conditions, the authors define the \emph{SDP exact region} of \eqref{QP}. 
Their key definition is stated in terms of the Lagrangian
\[ 
\mathcal{L}:~\RR^m \times \RR^n \to \RR,\quad
\mathcal{L}(\lambda, x) = x^T C x + 2c^Tx - \sum_{i=1}^m \lambda_i (x^TA_ix + 2a_i^Tx + \alpha_i) \]
of the \eqref{QP}
and the Hessian of $\mathcal{L}$, 
\[
H:\RR^m\to \RR^{n\times n},\quad
H(\lambda) = C - \sum_{i=1}^m \lambda_i A_i. 
\]

We denote $F = (f_1,\ldots,f_m)$ as the list of polynomials defining the feasible region of \eqref{QP} and its \emph{real variety}
\[\mathcal{V}_{\RR}(F) := \{x \in \RR^n \ : \ f_1(x) = 0,\ldots,f_m(x) = 0\}. \]

\begin{defn} \cite[Def. 3.2]{cifuentes2020the}\label{def:sdpExact}
The \emph{SDP exact region} of \eqref{QP}, 
$\mathcal{R}_F$, is the set of objective functions $(C,\ck) \in (\sym_n(\mathbb{R}), \mathbb{R}^n)$ such that the Shor relaxation of \eqref{QP} is exact.
Specifically,
\[ \mathcal{R}_F = \{(C,\ck) \ : \ H(\lambda) \succ 0, \ c - \sum_{i=1}^m \lambda_i a_i + H(\lambda) x = 0 \ \text{ for some } x \in \mathcal{V}_\mathbb{R}(F), \lambda \in \mathbb{R}^m \}.  \]
\end{defn}

\begin{rem}
The SDP exact region of \eqref{QP} is defined in terms of the \emph{set of polynomials} whose common real zero set defines the feasible region of \eqref{QP}. It is \emph{not} defined in terms of the ideal generated by these polynomials nor by the variety of common solutions.  
 This subtle point will be discussed in detail in \Cref{sec:2}.

\end{rem}

Motivated by \Cref{ques:sdp_exact}, this paper builds on work in \cite{cifuentes2017on, cifuentes2020the} towards understanding the geometry 
of SDP exact regions
for various classes of quadratic programs. 
In \cite{cifuentes2017on} the authors show that the SDP exact region of the Euclidean distance problem is full dimensional. 
In \cite{cifuentes2020the} the authors study the algebraic boundary of this region and the degree of the hypersurface defining it. 

Our results address fundamental questions on invariants of SDP exact regions. 
In \Cref{sec:2} we give conditions under which the SDP exact region of an ideal is invariant under the choice of generators. 
Specifically, we determine when 
two different sets of generators of an ideal define the same SDP exact region.
In \Cref{sec:3} we consider when the feasible region of \eqref{QP} has symmetry. 
We conclude in \Cref{sec:4} by applying these results to quadratic binary programs and use our understanding of $\mathcal{R}_F$ to design an algorithm that gives candidate solutions to \eqref{QP}.

\section{Different generators of the same ideal}\label{sec:2}

We first wish to understand one of the most fundamental questions about $\mathcal{R}_F$, namely when do two sets of equations defining the same feasible region have the same SDP exact region? 

For a list of polynomials $F = ( f_1,\ldots,f_m )$, 
we denote its  \emph{ideal} as
\[
\mathcal{I}(F) = \{ h_1 f_1 + \ldots + h_m f_m \ : \  h_i \in \mathbb{R}[x_1,\ldots,x_n] \}
\]
and the \emph{complex variety} of $F$ as
\begin{align*}
    \mathcal{V}_\mathbb{C}(F) &= \{ x \in \mathbb{C}^n \ : \ f_1(x) = 0, \ldots, f_m(x) = 0 \}.
\end{align*}

By the  Ideal–Variety Correspondence, for any two sets of polynomials over an algebraically closed field, $F = (f_1,\ldots,f_m)$ and $G = (g_1,\ldots,g_N)$
the equality of ideals $\mathcal{I}(F) = \mathcal{I}(G)$ implies the equality of varieties $\mathcal{V}_{\CC}(F) = \mathcal{V}_{\CC}(G)$. 
The goal of this section is to establish conditions under which $\mathcal{I}(F) = \mathcal{I}(G) $ implies an equality of SDP exact regions $\mathcal{R}_F = \mathcal{R}_G$. 
We begin with an example that shows this is not always the case.

\begin{example}\label{ex:1}
Consider $F = \{ x^2 - 1, xy - 1 \}$ and $G = \{ xy - y^2, y^2 - 1 \}$. 
One can verify by polynomial division that each polynomial of $G$ is in the ideal $\mathcal{I}(F) $ and vice versa to conclude  $\mathcal{I}(F) = \mathcal{I}(G)$.
Using \Cref{def:sdpExact} we see 
\begin{align*}
    \mathcal{R}_F &= \{ (C,\ck) \ : \ \begin{pmatrix}
    -\ck_1+\ck_2+\ck_{22} & -\ck_2 - \ck_{22} \\ -\ck_2 - \cij_{22} & \cij_{22} 
    \end{pmatrix} \succ 0 \ \text{or} \ \begin{pmatrix}
    \ck_1-\ck_2+\cij_{22} & \ck_2 - \cij_{22} \\ \ck_2 - \cij_{22} & \cij_{22} 
    \end{pmatrix} \succ 0 \} \\
    \mathcal{R}_G &= \{ (C,\ck) \ : \ \begin{pmatrix}
    \cij_{11} & -\ck_1 - \cij_{11} \\ - \ck_1 - \cij_{11} & \ck_1 + \cij_{11} - \ck_2 
    \end{pmatrix} \succ 0 \ \text{or} \ \begin{pmatrix}
    \cij_{11} & \ck_1 - \cij_{11} \\ \ck_1 - \cij_{11} & -\ck_1 + \cij_{11} + \ck_2 
    \end{pmatrix} \succ 0 \}.
\end{align*}

A figure of $\mathcal{R}_F$ projected onto $\ck_{1},\ck_2,\cij_{22}$ is shown in \Cref{fig:sdpex}. While $\mathcal{I}(F) = \mathcal{I}(G)$, $\mathcal{R}_F \neq \mathcal{R}_G$ in this case. 
In other words, even though $F=0$ and $G=0$ have the same set of solutions, the SDP exact regions are different. 
For instance in $\mathcal{R}_F$ we require $c_{22} >0$ but in $\mathcal{R}_G$ there are no constraints on $c_{22}$. \hfill
$\diamond$

\begin{figure}[h!]
    \centering
    \includegraphics[width = 0.4\textwidth]{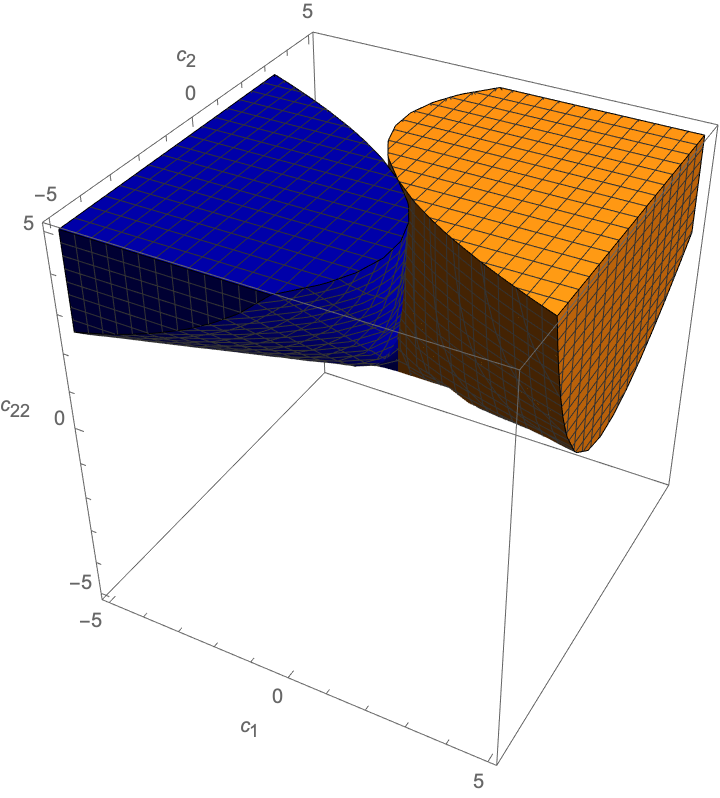}
    \caption{$\mathcal{R}_F$ from \Cref{ex:1}.}
    \label{fig:sdpex}
\end{figure}
\end{example}

\Cref{ex:1} shows that the SDP exact region of a variety is not always invariant under the choice of generators. This is important as it highlights that the choice of generators of the ideal defining the feasible region of \eqref{QP} can impact the computational efficiency of solving \eqref{QP}.
We would like to understand for which ideals the choice of generators is irrelevant. 
We start in the simplest case, namely when $\mathcal{V}_\mathbb{R}(F)$ is empty. 
This is an important case since determining feasibility 
is a natural starting point when attempting to solve any optimization problem. If \eqref{QP-Relax} is infeasible, then so is \eqref{QP} but the reverse implication is not always true. The next result shows that proving \eqref{QP} is infeasible is equivalent to showing $\mathcal{R}_F$ is empty.

\begin{prop}\label{prop:Rf_empty}
With the notation as in Definition~\ref{def:sdpExact},
$\mathcal{R}_F = \emptyset$ if and only if $\mathcal{V}_\mathbb{R}(F) = \emptyset$.
\end{prop}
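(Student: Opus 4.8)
The plan is to prove the two implications separately, handling one direction by an essentially immediate appeal to the definition and the other by exhibiting an explicit witness.

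First I would dispatch the implication $\mathcal{V}_\RR(F) = \emptyset \implies \mathcal{R}_F = \emptyset$. By \Cref{def:sdpExact}, membership of a pair $(C,\ck)$ in $\mathcal{R}_F$ requires the existence of some $x \in \mathcal{V}_\RR(F)$ satisfying the stationarity equation $c - \sum_{i=1}^m \lambda_i a_i + H(\lambda)x = 0$ together with $H(\lambda) \succ 0$ for some $\lambda \in \RR^m$. If $\mathcal{V}_\RR(F)$ is empty there is no admissible $x$, so no pair can lie in $\mathcal{R}_F$; hence $\mathcal{R}_F = \emptyset$. This direction uses nothing beyond the quantifier structure of the definition.

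The substantive content is the contrapositive of the remaining implication: assuming $\mathcal{V}_\RR(F) \neq \emptyset$, I would produce an element of $\mathcal{R}_F$. Fix any real point $x^* \in \mathcal{V}_\RR(F)$. The idea is to take the multiplier $\lambda = 0$, which collapses the Hessian to $H(0) = C$ and removes the terms $\sum_{i=1}^m \lambda_i a_i$ from the stationarity equation. Choosing $C = I_n$ then makes $H(0) = I_n \succ 0$ automatically, and setting $c = -x^*$ gives $c + H(0)x^* = -x^* + x^* = 0$. Thus the pair $(I_n, -x^*) \in (\sym_n(\RR), \RR^n)$ satisfies both conditions of \Cref{def:sdpExact} with the witness $(x,\lambda) = (x^*, 0)$, so $\mathcal{R}_F \neq \emptyset$.

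There is no serious obstacle here; the only point requiring care is the choice of a convenient witness, and setting $\lambda = 0$ is precisely what decouples the construction from the data $A_i, a_i, \alpha_i$ defining the feasible region. Conceptually, the construction reduces to the unconstrained minimization of the strictly convex objective $x^T x - 2(x^*)^T x$, whose global minimizer $x^*$ happens to lie on the variety; this reflects the fact that whenever the feasible set is nonempty one can always tilt the objective so that its unconstrained optimum sits on $\mathcal{V}_\RR(F)$, forcing exactness of the Shor relaxation.
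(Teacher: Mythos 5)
Your proof is correct and follows essentially the same route as the paper: the direction $\mathcal{V}_\mathbb{R}(F)=\emptyset \Rightarrow \mathcal{R}_F=\emptyset$ is read off from the definition, and the other direction is handled by exhibiting an explicit witness at a point $x^*\in\mathcal{V}_\mathbb{R}(F)$. Your specialization $\lambda=0$, $C=I_n$, $c=-x^*$ is in fact a slightly cleaner instance of the paper's construction (which fixes an arbitrary $\lambda$, takes $C=\alpha I$ with $\alpha$ large enough that $H(\lambda)\succ 0$, and then solves for $c$), since it sidesteps any eigenvalue bound on $\sum_i\lambda_i A_i$ altogether.
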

\begin{proof}
$(\Rightarrow):$ We prove this by contrapositive. Assume $\mathcal{V}_\mathbb{R}(F) \neq \emptyset$, we want to show that $\mathcal{R}_F \neq \emptyset$. Let $x \in \mathcal{V}_\mathbb{R}(F)$ and fix $\lambda \in \mathbb{R}^m$ and $C \in \text{Sym}_n(\mathbb{R})$ such that $H(\lambda) \succ 0$. Observe that such a $C$ exists since taking $C = \alpha \cdot I$ where $\alpha$ is greater than the smallest eigenvalue of $\sum_{i=1}^m \lambda_i A_i$ works. Then choose $c \in \mathbb{R}^n$ such that $c - \sum_{i=1}^m \lambda_i a_i + H(\lambda)x = 0 $. \\
\hfill \\
$(\Leftarrow):$ Suppose $\mathcal{V}_\mathbb{R}(F) = \emptyset$. Then trivially by \Cref{def:sdpExact}, $\mathcal{R}_F = \emptyset$.
\end{proof}

The condition for $\mathcal{R}_F$ to be empty as outlined in \Cref{prop:Rf_empty} is a natural one. We now extend our analysis 
to the case when $\mathcal{V}_\mathbb{R}(F)$ is nonempty. 
Define the \emph{dimension} of a variety $\mathcal{V}_{\mathbb{C}}(F)$ to be the maximal number of general\footnote{By `general', we mean the coefficients of the linear equations defining the hyperplanes lie in a dense, Zariski open set. This restricts the non-generic behavior to a Zariski closed set of codimension at least one, that is necessarily of Lebesgue measure zero.} hyperplanes whose common intersection with $\mathcal{V}_\CC(F)$ is nonempty.

\begin{prop}\label{thm:sdp_under_linear}
Consider the sets of polynomials
$F=(f_1,\dots, f_m)$ and $G = (g_1,\ldots,g_N)$ in $\RR[x_1,\dots,x_n]_{\leq 2}$ 
with  $\dim ( \mathcal{V}_{\mathbb{C}}(F)) =n-m$.
If each polynomial in $G$ is a $\RR$-linear combination of $F$ and 
$\mathcal{I}(F) = \mathcal{I}(G)$, then 
 $\mathcal{R}_F = \mathcal{R}_G$.

\end{prop}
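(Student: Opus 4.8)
The plan is to reduce the equality $\mathcal{R}_F = \mathcal{R}_G$ to a single linear-algebraic fact about the two generating sets, and then to extract that fact from the dimension hypothesis via Krull's height theorem.

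First, since $\mathcal{I}(F) = \mathcal{I}(G)$, the Ideal–Variety Correspondence gives $\mathcal{V}_\CC(F) = \mathcal{V}_\CC(G)$, hence also $\mathcal{V}_\mathbb{R}(F) = \mathcal{V}_\mathbb{R}(G)$; so the membership constraint ``$x \in \mathcal{V}_\mathbb{R}(\cdot)$'' appearing in \Cref{def:sdpExact} is identical for both lists. I would write each $g_j(x) = x^T B_j x + 2 b_j^T x + \beta_j$ and record the linear-combination hypothesis as $g_j = \sum_{i=1}^m L_{ji} f_i$ for a matrix $L = (L_{ji}) \in \RR^{N\times m}$; comparing quadratic, linear and constant parts yields $B_j = \sum_i L_{ji} A_i$ and $b_j = \sum_i L_{ji} a_i$. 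Consequently, for any $\mu \in \RR^N$, setting $\lambda = L^T\mu \in \RR^m$ gives $\sum_j \mu_j B_j = \sum_i \lambda_i A_i$ and $\sum_j \mu_j b_j = \sum_i \lambda_i a_i$, so the Hessian and gradient conditions witnessing membership in $\mathcal{R}_G$ at $(\mu,x)$ are literally the conditions witnessing membership in $\mathcal{R}_F$ at $(\lambda,x)$. This already gives $\mathcal{R}_G \subseteq \mathcal{R}_F$.

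For the reverse inclusion I would show that $L^T\colon \RR^N \to \RR^m$ is surjective, so that every multiplier $\lambda$ witnessing $(C,c) \in \mathcal{R}_F$ is of the form $\lambda = L^T\mu$. Equivalently, writing $V_F = \mathrm{span}_\RR\{f_1,\dots,f_m\}$ and $V_G = \mathrm{span}_\RR\{g_1,\dots,g_N\}$ inside $\RR[x]_{\leq 2}$, it suffices to prove $V_G = V_F$. The hypothesis gives $V_G \subseteq V_F$, so $k := \dim V_G \leq \dim V_F \leq m$. On the other hand, a basis of $V_G$ consists of $k$ quadratics and generates the ideal $\mathcal{I}(G)$; by Krull's height theorem every component of $\mathcal{V}_\CC(G)$ then has codimension at most $k$, so $\codim \mathcal{V}_\CC(G) \leq k$. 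Since $\codim \mathcal{V}_\CC(G) = \codim \mathcal{V}_\CC(F) = m$ by the dimension hypothesis, we obtain $k \geq m$, whence $k = m$ and $V_G = V_F$ (and, as a byproduct, $f_1,\dots,f_m$ are linearly independent). Thus $\rank L = m$, $L^T$ is surjective, and for any $(C,c) \in \mathcal{R}_F$ with witness $(\lambda,x)$ I may choose $\mu$ with $L^T\mu = \lambda$ to transport the conditions back to $\mathcal{R}_G$, giving $\mathcal{R}_F \subseteq \mathcal{R}_G$ and hence equality.

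The substitution $\lambda = L^T\mu$ matching the Hessian and gradient data is routine; the step I expect to be the crux is the surjectivity of $L^T$, i.e. the identity $V_G = V_F$. This is precisely where the hypothesis $\dim \mathcal{V}_\CC(F) = n-m$ is indispensable, and where Krull's height theorem converts ``few generators'' into ``small codimension.'' It is worth stressing that the conclusion genuinely fails without this hypothesis: in \Cref{ex:1} the generators of $G$ are \emph{not} $\RR$-linear combinations of $F$, so $V_G \neq V_F$, and indeed $\mathcal{R}_F \neq \mathcal{R}_G$.
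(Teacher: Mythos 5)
Your proof is correct, and its computational core --- transporting Lagrange multipliers through the coefficient matrix $L$, so that $\lambda = L^T\mu$ identifies the Hessian condition $H(\lambda)\succ 0$ and the gradient condition for the two generating sets --- is exactly the mechanism the paper uses. Where you genuinely depart from the paper is in the structure around that core, and your version is tighter on the two points where the paper is loose. First, the paper splits into cases: for $N=m$ it inverts $L$ (taking $\hat\lambda = L^{-T}\lambda$), and for $N>m$ it only sketches, in one sentence, a reduction to the square case by selecting $m$ of the $g_j$; you instead observe that the inclusion $\mathcal{R}_G \subseteq \mathcal{R}_F$ needs no rank hypothesis at all (push $\mu$ forward to $L^T\mu$), and that the reverse inclusion needs only surjectivity of $L^T$, which handles all $N \geq m$ uniformly and with no case split. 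Second, the paper simply asserts that $\mathcal{I}(F)=\mathcal{I}(G)$ together with $\dim\mathcal{V}_{\CC}(F) = n-m$ forces $\rank(L)=m$; you actually prove this, via Krull's height theorem: a basis of $V_G$ gives $k = \dim V_G$ generators of $\mathcal{I}(G)$, so every component of $\mathcal{V}_{\CC}(G)$ has codimension at most $k$, and comparing with $\codim \mathcal{V}_{\CC}(G) = \codim \mathcal{V}_{\CC}(F) = m$ yields $k=m$, hence $V_G = V_F$ and $L^T$ is surjective. This is the right way to fill the paper's gap, and as a bonus your argument never needs the separate empty-feasible-set case (\Cref{prop:Rf_empty}) with which the paper's proof begins. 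One small correction to your closing remark: \Cref{ex:1} \emph{does} satisfy the dimension hypothesis there ($\dim \mathcal{V}_{\CC}(F) = 0 = n-m$); what fails in that example is the hypothesis that each polynomial of $G$ is an $\RR$-linear combination of $F$, so it witnesses the necessity of that hypothesis rather than of the dimension hypothesis.
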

\begin{proof}
If $\mathcal{V}_{\mathbb{R}}(F) = \emptyset$, then by \Cref{prop:Rf_empty}, $\mathcal{R}_F = \mathcal{R}_G~=~\emptyset$. Therefore, we assume $\mathcal{V}_{\mathbb{R}}(F)~\neq~\emptyset$ and first consider the case $N = m$.

Let $g_t = x^T \hat{A}_t x + 2 \hat{a}_t^T x + \hat{\alpha}_t$ and $f_i = x^T A_i x + 2a_i^T x + \alpha_i$ for $i, t \in [m]$. 
Since each $g_t$ is a $\RR$-linear combination of $f_1,\dots,f_m$, there exists $\ell_{ti}\in \RR$ such that
$g_t =  \sum_{i=1}^m \ell_{ti} f_i$:
\[g_t = \sum_{i=1}^m \ell_{ti}f_i = \sum_{i=1}^m \ell_{ti} (x^T A_i x + 2a_i^T x + \alpha_i), \quad t \in [m].\] 

First we show $\mathcal{R}_F \subseteq \mathcal{R}_{G}$. Suppose $(C,c) \in \mathcal{R}_F$ corresponding to point $x \in \mathcal{V}_{\mathbb{R}}(F)$ and $\lambda \in \mathbb{R}^m$. We claim $(C,c) \in \mathcal{R}_{G}$ corresponding to point $x \in \mathcal{V}_{\mathbb{R}}(G)$ and $\hat{\lambda} \in \mathbb{R}^m$ where $\hat{\lambda} = L^{-T}\lambda$ and $L \in \mathbb{R}^{m \times m}$ is the matrix with $[L]_{ti} = \ell_{ti}$. Since $\mathcal{I}(F) = \mathcal{I}(G)$, and $\dim(\mathcal{V}_{\mathbb{C}}(F)) = n-m$, we have that $\rank(L) = \rank(L^{-1}) = m$, therefore, such a $\hat{\lambda}$ exists. Let $\hat{H}$ and $H$ be the Hessian of \eqref{QP} with feasible set defined by $G$ and $F$ respectively. Then,
\begin{align*}
    \hat{H}(\hat{\lambda}) &= \sum_{t=1}^m \hat{\lambda}_t \hat{A}_t \\ 
    &= \sum_{t=1}^m \hat{\lambda}_t \sum_{i=1}^m \ell_{ti} A_i \\
    &= \sum_{i=1}^m A_i \sum_{t=1}^m \hat{\lambda}_t \ell_{ti} \\
    &= \sum_{i=1}^m \lambda_i A_i = H(\lambda) \succ 0
\end{align*}
where the last line is by definition of $\hat{\lambda}$. Now consider the equality constraints in \Cref{def:sdpExact}:
\begin{align*}
c - \sum_{t=1}^m \hat{\lambda}_t \hat{a}_t + \hat{H}(\hat{\lambda})x &=  c - \sum_{t=1}^m \hat{\lambda}_t \sum_{i=1}^m \ell_{ti}a_i + H(\lambda) x \\
&= c - \sum_{i=1}^m a_i \sum_{t=1}^m \hat{\lambda}_t \ell_{ti} + H(\lambda) x \\
&= c - \sum_{i=1}^m \lambda_i a_i + H(\lambda ) x = 0.
\end{align*}
The same argument holds for showing $\mathcal{R}_{G} \subseteq \mathcal{R}_F$ by considering $\lambda = L^T \hat{\lambda}$.

Now consider when $N > m$. Since each polynomial in $G$ is a $\RR$-linear combination of $f_1,\ldots,f_m$, up to reordering, there exists a choice of $g_1,\ldots g_m$ such that $g_{m+1},\ldots,g_N$ are $\RR$-linear combinations of $g_1,\ldots,g_m$ and the $\RR$-linear combination is full rank.
\end{proof}

\Cref{thm:sdp_under_linear} tells us that for a polynomial system $F = 0$, $\mathcal{R}_F$ is invariant under the choice of generators of $\mathcal{I}(F)$ if for any other generators $G$ such that $\mathcal{I}(F) = \mathcal{I}(G)$, $G$ can be written as $G = L \cdot F$ for some $L \in \mathbb{R}^{N \times m}$ with rank $m$. 
Observe that for given polynomials $F$ and $G$, deciding whether $G = L \cdot F$ can be checked easily. 
A more difficult question is: Given a polynomial system $F = 0$, can \emph{every} polynomial system $G = 0$ such that $\mathcal{I}(F) = \mathcal{I}(G)$ be written as $G = L \cdot F$? 
We give a sufficient condition under which ideals defined by quadratic generators satisfy this assumption.

\begin{thm}\label{cor:sdp_welldefined}
Consider $F = ( f_1,\ldots,f_m )$ where  
$f_i\in \mathbb{R}[x_1,\ldots,x_n]_{\leq 2}$
for $i \in [m]$ and $m \leq n$.
Write $f_i = q_i + p_i$ where $q_i$ is the homogeneous degree two part of $f_i$ and $p_i$ is the degree zero and one part. 
If the variety $ \mathcal{V}_\mathbb{C}(q_1,\ldots,q_m )$ has dimension $n-m$, then $\mathcal{R}_F$ is invariant under the choice of generators of $\mathcal{I}(F)$.
\end{thm}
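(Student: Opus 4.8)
The plan is to deduce the statement from \Cref{thm:sdp_under_linear}. Fix an arbitrary family $G = (g_1,\dots,g_N)$ of polynomials in $\RR[x_1,\dots,x_n]_{\leq 2}$ with $\mathcal{I}(G) = \mathcal{I}(F)$; these are exactly the alternative generating sets of $\mathcal{I}(F)$ for which $\mathcal{R}_G$ is defined. To invoke \Cref{thm:sdp_under_linear} I must verify its two hypotheses: that $\dim(\mathcal{V}_{\CC}(F)) = n-m$, and that each $g_t$ is an $\RR$-linear combination of $f_1,\dots,f_m$. At the outset I record that the assumption $\dim(\mathcal{V}_{\CC}(q_1,\dots,q_m)) = n-m$ says precisely that the homogeneous quadrics $q_1,\dots,q_m$ cut out codimension $m$; since $\CC[x_1,\dots,x_n]$ is Cohen--Macaulay this is equivalent to $(q_1,\dots,q_m)$ being a regular sequence, and by faithfully flat descent along $\RR[x]\hookrightarrow\CC[x]$ it is a regular sequence over $\RR$ as well. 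In particular every $q_i\neq 0$, so each $f_i$ has degree exactly two, and the first syzygy module of $(q_1,\dots,q_m)$ is generated by the Koszul relations $q_k e_j - q_j e_k$.

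The heart of the argument, and the step I expect to be the main obstacle, is the claim that every $g \in \mathcal{I}(F)$ with $\deg g \le 2$ is an $\RR$-linear combination of $f_1,\dots,f_m$. I would prove this by induction on $D := \max_i \deg h_i$, taken over all representations $g = \sum_{i=1}^m h_i f_i$ with $h_i \in \RR[x]$. If $D=0$ there is nothing to show. If $D \ge 1$, comparing the homogeneous component of top degree $D+2$ (whose only contribution is $\sum_i h_i^{(D)} q_i$, with $h_i^{(D)}$ the degree-$D$ part of $h_i$) forces the syzygy $\sum_i h_i^{(D)} q_i = 0$. Because $(q_1,\dots,q_m)$ is a regular sequence this syzygy is Koszul, so there are homogeneous $c_{ij}\in\RR[x]$ of degree $D-2$ with $c_{ij}=-c_{ji}$ and $h_i^{(D)}=\sum_j c_{ij}q_j$. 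Writing $q_j = f_j - p_j$ and using antisymmetry of $c_{ij}$ against the symmetric product $f_if_j$, one computes
\[
\sum_i h_i^{(D)} f_i = \sum_{i,j} c_{ij} q_j f_i = \sum_{i,j} c_{ij} f_i f_j - \sum_{i,j} c_{ij} p_j f_i = -\sum_i r_i f_i, \qquad r_i := \sum_j c_{ij} p_j,
\]
since $\sum_{i,j} c_{ij} f_i f_j = 0$. Setting $\tilde h_i := h_i - h_i^{(D)} - r_i$ gives $g = \sum_i \tilde h_i f_i$ with $\deg \tilde h_i \le D-1$ (as $\deg p_j \le 1$ forces $\deg r_i \le D-1$), strictly lowering $D$. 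Iterating expresses $g$ with constant coefficients. The case $D=1$ is consistent with termination: there $c_{ij}$ has negative degree, hence vanishes, so the top syzygy forces $h_i^{(1)}=0$, contradicting $D=1$; thus the recursion collapses to $D=0$.

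Applying this claim to each $g_t \in G \subseteq \mathcal{I}(F)$ establishes the second hypothesis of \Cref{thm:sdp_under_linear}. The claim also shows $\mathcal{V}_{\CC}(F)\neq\emptyset$: otherwise the Nullstellensatz gives $1\in\mathcal{I}(F)$, and the claim writes $1=\sum_i \ell_i f_i$ with constants $\ell_i$, forcing $\sum_i \ell_i q_i=0$ in degree two, impossible since the elements of a regular sequence are $\RR$-linearly independent. For the first hypothesis I would show $\dim(\mathcal{V}_{\CC}(F)) = n-m$: every component has dimension at least $n-m$ by Krull's height theorem, and for the reverse bound I pass to the projective closure $\overline{W}\subseteq\mathbb{P}^n$ of a component $W$. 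Since $W$ is affine and nonempty, $\overline{W}\not\subseteq\{x_0=0\}$, and on $\{x_0=0\}$ the homogenizations $f_i^h$ restrict to $q_i$; hence $\overline{W}\cap\{x_0=0\}$ has dimension $\dim W - 1$ and lies in $\mathcal{V}_{\mathbb{P}}(q_1,\dots,q_m)$, of dimension $(n-m)-1$, giving $\dim W \le n-m$. With both hypotheses verified, \Cref{thm:sdp_under_linear} yields $\mathcal{R}_F = \mathcal{R}_G$, and since $G$ was an arbitrary degree-$\le 2$ generating set of $\mathcal{I}(F)$, the region $\mathcal{R}_F$ is invariant under the choice of generators.
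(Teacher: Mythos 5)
Your proposal is correct and follows essentially the same route as the paper: both reduce the statement to \Cref{thm:sdp_under_linear} and then show, by descent on the degree of the coefficients using the Koszul syzygies of the regular sequence $(q_1,\dots,q_m)$, that every quadratic element of $\mathcal{I}(F)$ is an $\RR$-linear combination of $f_1,\dots,f_m$. Your explicit verification that $\dim(\mathcal{V}_{\CC}(F)) = n-m$ (in particular that $\mathcal{V}_{\CC}(F)\neq\emptyset$) is a useful extra degree of care, since the paper invokes \Cref{thm:sdp_under_linear} without checking that hypothesis.
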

\begin{proof}
Consider the quadratic polynomials $G= ( g_1,\ldots,g_N )$ where $\mathcal{I}(G) = \mathcal{I}(F)$.
Then, for each $g \in G$ there exists some $a_1,\dots,a_m \in \mathbb{R}[x_1,\ldots,x_n]$  such that
\begin{equation}\label{eq:qa}
g = a_1 f_1 + \ldots + a_m f_m. 
\end{equation}
By \Cref{thm:sdp_under_linear}, it suffices to show that $a_i\in \RR$. 
We prove this by inducting on the minimum $d$ such that $a_1,\dots,a_m\in \RR[x_1,\dots,x_n]_{\leq d}$.

The base case $d = 0$ is immediate, so we assume $d >0$.
For $a_1,\dots,a_m$ in \eqref{eq:qa},~write  
\[ a_i = a_i^{(d)} + a_i'\]
where $a_i^{(d)}$ is homogeneous of degree $d$ or zero and $a'_1,\dots,a_m'\in \RR[x_1,\dots,x_n]_{<d}$. 
The degree $d+2$ part of $g$ in \eqref{eq:qa} is zero because $g$ is quadratic. This induces the relation
\[ 0= a_1^{(d)} q_1 + \ldots + a_m^{(d)} q_m. \]

Since $\dim(\mathcal{V}_{\mathbb{C}}(q_1,\ldots,q_m)) = n-m$, $(q_1,\ldots,q_m )$ form a regular sequence, meaning they only have Koszul syzygies. As a consequence, there exists a skew-symmetric matrix $M$ of homogeneous forms of degree at most $d-2$ such that
\[
\begin{pmatrix}
a_1^{(d)} \\ \vdots \\ a_m^{(d)}\end{pmatrix} = M \cdot \begin{pmatrix}
q_1 \\ \vdots \\ q_m
\end{pmatrix}.
\]

Substituting, we write
\[
\begin{pmatrix}
a_1-a_1' \\ \vdots \\ a_m - a_m'\end{pmatrix} = M \cdot \begin{pmatrix}
f_1 - p_1 \\ \vdots \\ f_m - p_m
\end{pmatrix}.
\]
Putting this together, we write
\[
\begin{pmatrix}
a_1 \\ \vdots \\ a_m
\end{pmatrix} = M \cdot \begin{pmatrix}
f_1 \\ \vdots \\ f_m 
\end{pmatrix} - M \cdot \begin{pmatrix}
p_1 \\ \vdots \\ p_m 
\end{pmatrix} + \begin{pmatrix}
a_1' \\ \vdots \\ a_m'
\end{pmatrix} = M \cdot \begin{pmatrix}
f_1 \\ \vdots \\ f_m 
\end{pmatrix}  + \begin{pmatrix}
a_1'' \\ \vdots \\ a_m''
\end{pmatrix}
\]
where $\deg(a_i'')\leq d-2 + 1 = d-1$. Since $M$ is skew-symmetric, and by taking the inner product of each side with $(f_1, \ \ldots, \ f_m)^T$, we see that 
\[ g = a_1'' f_1 + \ldots + a_m'' f_m. \]
We then use the induction hypothesis on $a_i''$ to conclude that 
\[g = \alpha_1 f_1 + \ldots + \alpha_m f_m \]
where $\alpha_i \in \mathbb{R}$. Therefore, each polynomial in $G$ can be written as an $\RR$-linear combination of polynomials in $F$, so by \Cref{thm:sdp_under_linear} we are done.
\end{proof}

Note that the variety $\mathcal{V}_{\CC}(q_1,\ldots,q_m)$ having dimension $n-m$ is equivalent to saying that $q_1,\ldots,q_m$ form a \emph{complete intersection}. \Cref{cor:sdp_welldefined} then can be strengthened to say that for almost all varieties defined by quadratic polynomials that have full monomial support, $\mathcal{R}_F$ is invariant under the choice of generators of $\mathcal{I}(F)$.

\begin{cor}
Consider $F = (  f_1,\ldots,f_m )$ where $\deg(f_i) = 2$ and each $f_i$ has full monomial support with general coefficients.
Then $\mathcal{R}_F$ is invariant under the choice of generators of $\mathcal{I}(F)$.
\end{cor}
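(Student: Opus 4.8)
The plan is to reduce the statement to \Cref{cor:sdp_welldefined}. That result requires only that, writing $f_i = q_i + p_i$ with $q_i$ the homogeneous degree-two part of $f_i$, the leading forms satisfy $\dim\mathcal{V}_\CC(q_1,\ldots,q_m) = n-m$, i.e. that $q_1,\ldots,q_m$ form a complete intersection. So the entire task reduces to verifying this single dimension condition for general quadratic $f_i$, after which \Cref{cor:sdp_welldefined} delivers invariance of $\mathcal{R}_F$. Note this tacitly uses $m \le n$, inherited from \Cref{cor:sdp_welldefined}; for $m > n$ the quantity $n-m$ is no longer the relevant codimension, so I would read the corollary under that standing hypothesis.

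First I would set up the genericity. Since each $f_i$ has full monomial support with general coefficients, the leading forms $q_1,\ldots,q_m$ are general quadratic forms, meaning their coefficient tuple ranges over a dense Zariski-open subset $V$ of the affine space $\mathbb{A}^N$ parametrizing all $m$-tuples of quadratic forms in $n$ variables, where $N = m\binom{n+1}{2}$. Next I would argue that the complete-intersection condition carves out a nonempty Zariski-open subset $U \subseteq \mathbb{A}^N$. Openness follows from upper semicontinuity of fiber dimension: for every tuple one has $\dim\mathcal{V}_\CC(q_1,\ldots,q_m) \ge n-m$ by Krull's height theorem, so the equality locus is precisely where the dimension is not too large, which is open. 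For nonemptiness I would exhibit an explicit seed, for instance $q_i = x_i^2$, for which $\mathcal{V}_\CC(x_1^2,\ldots,x_m^2) = \{x_1 = \cdots = x_m = 0\}$ has dimension exactly $n-m$; hence $U \neq \emptyset$. Because $\mathbb{A}^N$ is irreducible, $U$ is dense, so $U \cap V$ is a nonempty dense open set.

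It then follows that for general full-support coefficients the leading forms $q_1,\ldots,q_m$ lie in $U$, so $\dim\mathcal{V}_\CC(q_1,\ldots,q_m) = n-m$. Applying \Cref{cor:sdp_welldefined} with this verified hypothesis concludes that $\mathcal{R}_F$ is invariant under the choice of generators of $\mathcal{I}(F)$.

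The step I expect to be the main (and only real) obstacle is reconciling \emph{full monomial support} with \emph{general coefficients} when producing the seed: the natural seed $q_i = x_i^2$ does not itself have full support, so I must place the whole argument in the ambient parameter space $\mathbb{A}^N$ of all coefficient tuples, where both the complete-intersection locus $U$ and the full-support general-coefficient locus $V$ are dense open, and invoke irreducibility to guarantee $U \cap V \neq \emptyset$. This avoids the harder and unnecessary task of exhibiting a full-support complete intersection by hand.
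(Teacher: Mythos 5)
Your proposal is correct and takes essentially the same route as the paper: the paper states this corollary as an immediate consequence of \Cref{cor:sdp_welldefined}, leaving implicit the standard fact that general quadratic forms $q_1,\ldots,q_m$ form a complete intersection (dimension $n-m$). Your semicontinuity-plus-seed argument simply makes that genericity fact explicit, and your care with the tacit $m \le n$ hypothesis is appropriate.
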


Observe that in \Cref{ex:1} the SDP exact region, $\mathcal{R}_F$, is not invariant under choice of generators. We observe that the variety defined by the homogeneous degree two part of $F$ is $\{ x^2, xy \}$ and has dimension one, not zero. 
In addition,  for $F = ( f_1,\ldots, f_m )$ with $\deg(f_i) = 1$ for some $i$, the hypothesis of \Cref{cor:sdp_welldefined} is not satisfied. 
The next example shows how the SDP exact region need not be invariant under choice of generators in the latter case.

\begin{example}\label{ex:Rf_diff}
Consider $F = ( x-y)$ and $G = ( x-y, x^2 - xy + x - y )$. One can verify that $\mathcal{I}(F) = \mathcal{I}(G)$.  First observe that the Hessian of 
\eqref{QP} with feasible region defined by $F$, is $C$. Therefore any $(C,c) \in \mathcal{R}_F$ must have $C \succ 0$. We claim there exists $C \not\succ 0$ in $\mathcal{R}_{G}$. Consider $C = \begin{bmatrix}
-\frac{1}{2} & -1 \\ -1 & 5
\end{bmatrix}$ and $c = [2 \ \  2 ]^T$. Then taking $(\lambda_1, \lambda_2) = (- \frac{31}{5}, -1)$ and $x = -\frac{4}{5}$, one can check that $(C,c) \in \mathcal{R}_{G}$.
\hfill$\diamond$

\end{example}

\section{Symmetry in SDP exact regions $\mathcal{R}_F$} \label{sec:3}
In this section we study how the SDP exact region $\mathcal{R}_F$ is impacted by symmetries in $F$. 
Many applications naturally possess symmetry and exploiting this symmetry for computational gain has been explored in polynomial system solving and optimization \cite{wang2021tssos,amendola2021solving,lindberg2021exploiting,riener2013exploiting}. 

 For a polynomial system $F=0$, we encode its symmetry via finite subgroups of invertible $n \times n$ matrices, $GL_n(\mathbb{R})$. For a group $\mathfrak{G} \subset GL_n(\mathbb{R})$, we say $F$ is \emph{invariant} under the action of $\mathfrak{G}$ 
 if $F(g \cdot x) = 0$ for all $x \in \mathcal{V}_{\CC}(F)$ and $g \in \mathfrak{G}$. 
 In this case, we say $G$ \emph{acts} on the variety $\mathcal{V}_{\CC}(F)$ by standard matrix-vector multiplication $g \cdot x$. 
 For $x \in \mathcal{V}_{\CC}(F)$, the \emph{orbit} of $x$ is   
 \[
\mathcal{O}_x := \{y \in \mathcal{V}_{\mathbb{C}}(F) \ : \ y = g \cdot x \ \text{ for some } \ g \in \mathfrak{G} \}
 \]
 Observe that if $x \in \mathcal{O}_y$ then $y \in \mathcal{O}_x$, meaning that the orbits of $\mathcal{V}_{\CC}(F)$ partition $\mathcal{V}_{\CC}(F)$. For each orbit, we pick one representative and denote $\mathcal{V}_{\CC}(F)^\mathfrak{G} \subseteq \mathcal{V}_{\CC}(F)$ the set of these representatives. Specifically, $\mathcal{V}_{\CC}(F)^\mathfrak{G}$ has the property that for  
 distinct $x,y \in \mathcal{V}_{\CC}(F)^\mathfrak{G}$, 
 $\mathcal{O}_x \neq \mathcal{O}_y$ and $\mathcal{V}_{\CC}(F) = \bigcup_{x \in \mathcal{V}_{\CC}(F)^\mathfrak{G}} \{ g \cdot x \ : \ g \in \mathfrak{G} \}$. For clarity we demonstrate this with an example.

\begin{example}\label{ex:sym}
Consider the polynomial system $F(x) = x^T Ax - 1 = 0$ and the corresponding variety $\mathcal{V}_\mathbb{C}(F) = \{x \in \mathbb{C}^n \ : \ x^T A x =1 \}$ where $A \in \symnr$. The variety $\mathcal{V}_\mathbb{C}(F)$ has the symmetry that if $x \in \mathcal{V}_\mathbb{C}(F)$, then $-x \in \mathcal{V}_\mathbb{C}(F)$. We encode this symmetry with the group
\[ \mathfrak{G} = \left \{ \begin{pmatrix}
1 & & & \\ & 1 & & \\ & & \ddots & \\ & & & 1
\end{pmatrix}, \begin{pmatrix}
-1 & & & \\ & -1 & & \\ & & \ddots & \\ & & & -1
\end{pmatrix} \right \}  \]
where $\mathfrak{G}$ acts on $\mathcal{V}_\mathbb{C}(F)$ by matrix-vector multiplication. One can verify that for any $x \in \mathcal{V}_{\CC}(F), g \cdot x \in \mathcal{V}_{\CC}(F)$ for all $g \in \mathfrak{G}$. In this case $|\mathfrak{G}| = 2$, so $\mathfrak{G}$ partitions $\mathcal{V}_\mathbb{C}(F)$ into infinitely many orbits of size two, namely $\mathcal{O}_x = \{x, -x\}$.
\hfill$\diamond$
\end{example}

\Cref{ex:sym}
hints
that understanding $\mathcal{R}_F$ when $F$ is invariant under the action of a finite subgroup of $GL_n(\mathbb{R})$ is equivalent 
to understanding $\mathcal{R}_F$ when $F$ is the image of a linear transformation. 
Given $M\in GL_n(\mathbb{R})$, denote $\mathcal{R}_{M \cdot F}$ as the SDP exact 
region of $M \cdot F = \{f_1(Mx),\ldots,f_m(Mx) \}$ and denote the  variety  
\[ M \cdot \mathcal{V}_\mathbb{R}(F) := \{ M \cdot x \ : \ x \in \mathcal{V}_\mathbb{R}(F) \}. \]

\begin{thm}\label{thm:affine_sdp}
Suppose $(C,c) \in \mathcal{R}_F$ and consider the quadratic program
\begin{align*}
    \min_{x \in \mathbb{R}^n} \ f_0(x) \quad \text{subject to} \quad f_1(Mx) = \cdots = f_m(Mx) = 0 \tag{M-QCQP} \label{M-QP}
\end{align*}
 where $M \in GL_n(\mathbb{R})$. Then $(M^{-T}CM^{-1}, M^{-T}c) \in \mathcal{R}_{M\cdot F}$.
\end{thm}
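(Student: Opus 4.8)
The plan is to verify membership in $\mathcal{R}_{M\cdot F}$ directly from \Cref{def:sdpExact} by transporting a witness for $(C,c)\in\mathcal{R}_F$ across the linear change of variables. Since $(C,c)\in\mathcal{R}_F$, there are $\lambda\in\mathbb{R}^m$ and $x\in\mathcal{V}_{\mathbb{R}}(F)$ with $H(\lambda)=C-\sum_{i=1}^m\lambda_i A_i\succ 0$ and $c-\sum_{i=1}^m\lambda_i a_i+H(\lambda)x=0$. My first step is a preliminary computation recording the data of the transformed system: its real variety is $M\cdot\mathcal{V}_{\mathbb{R}}(F)$, and writing each generator of $M\cdot F$ in the standard form $x^T\tilde A_i x+2\tilde a_i^T x+\tilde\alpha_i$ produces constraint matrices $\tilde A_i=M^{-T}A_iM^{-1}$, linear parts $\tilde a_i=M^{-T}a_i$, and constants $\tilde\alpha_i=\alpha_i$.

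The heart of the argument is that the witness transforms covariantly, and I would keep the \emph{same} multiplier $\lambda$ while replacing the point $x$ by $Mx$, which lies in $M\cdot\mathcal{V}_{\mathbb{R}}(F)$ because $x\in\mathcal{V}_{\mathbb{R}}(F)$. With $\tilde C=M^{-T}CM^{-1}$, the transformed Hessian is a congruence of the original,
\[
\tilde H(\lambda)=\tilde C-\sum_{i=1}^m\lambda_i\tilde A_i=M^{-T}\!\left(C-\sum_{i=1}^m\lambda_i A_i\right)\!M^{-1}=M^{-T}H(\lambda)M^{-1}.
\]
Because $M\in GL_n(\mathbb{R})$ and congruence by an invertible matrix preserves positive definiteness, $H(\lambda)\succ 0$ yields $\tilde H(\lambda)\succ 0$, establishing the first defining condition of \Cref{def:sdpExact}.

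For the stationarity condition I substitute the candidate witness into the linear equation and factor $M^{-T}$ out on the left:
\[
\tilde c-\sum_{i=1}^m\lambda_i\tilde a_i+\tilde H(\lambda)(Mx)=M^{-T}c-\sum_{i=1}^m\lambda_i M^{-T}a_i+M^{-T}H(\lambda)M^{-1}Mx=M^{-T}\!\left(c-\sum_{i=1}^m\lambda_i a_i+H(\lambda)x\right),
\]
which vanishes since the parenthesized vector is zero by hypothesis. Hence $(\lambda,Mx)$ witnesses $(M^{-T}CM^{-1},M^{-T}c)\in\mathcal{R}_{M\cdot F}$, completing the proof.

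The underlying computation is short, so the real care is bookkeeping: keeping the direction of $M$ versus $M^{-1}$ consistent across the constraint matrices, the variety, the objective, and the witness point. The conceptual point I would emphasize is that the two conditions defining $\mathcal{R}_F$ encode a strictly complementary stationary pair, and this pair is covariant under the congruence $C\mapsto M^{-T}CM^{-1}$ together with the change of variables $x\mapsto Mx$; once one notes that congruence preserves $\succ 0$ and that the stationarity vector simply rescales by $M^{-T}$, nothing further is needed. The only genuine subtlety to flag is that the Lagrange multiplier is left \emph{unchanged}, which is exactly what makes the Hessian identity a clean congruence rather than forcing a simultaneous transformation of $\lambda$.
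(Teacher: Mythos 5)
Your proof is correct and takes essentially the same route as the paper's: keep the multiplier $\lambda$ unchanged, move the witness point to $Mx$, observe that $\tilde H(\lambda)=M^{-T}H(\lambda)M^{-1}\succ 0$ by congruence, and factor $M^{-T}$ out of the stationarity equation. One remark: like the paper's own proof, you silently read the transformed generators as $f_i(M^{-1}x)$ (so that $\tilde A_i=M^{-T}A_iM^{-1}$ and the feasible set is $M\cdot\mathcal{V}_{\mathbb{R}}(F)$), which is the interpretation under which the stated conclusion is true, even though the theorem statement writes the constraints as $f_i(Mx)$.
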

\begin{proof}
Consider $M \in GL_n(\mathbb{R}) $ and recall that $M$ acts on $x \in \mathcal{V}_\mathbb{R}(F)$ by $x \mapsto Mx$. Using the change of coordinates induced by $M$, the quadratic program \eqref{eq:1} becomes:
\begin{align}
    \min_{x \in \mathbb{R}^n} \ g(M^{-1}x) \quad \text{subject to} \quad f_i(M^{-1}x) = 0, \ i \in [m]. \label{eq:M_inverse}
\end{align}
In matrix notation, $f_0(M^{-1}x)$ and $f_i(M^{-1}x)$ for $i \in [m]$ are:
\begin{align*}
    f_0(M^{-1}x) &= x^T M^{-T} CM^{-1} x + 2c^TM^{-1}x \\
    f_i(M^{-1}x) &= x^T M^{-T}A_i M^{-1} x + 2a_i^T M^{-1} x + \alpha_i.
\end{align*}
The Hessian of the Lagrangian of \eqref{eq:M_inverse} is
\[
\hat{H}(\lambda) = M^{-T}H(\lambda) M^{-1}
\]
where $H(\lambda)$ is the Hessian of the Lagrangian associated to \eqref{eq:1}. It is clear that if $H(\lambda) \succ 0$ for some $\lambda \in \mathbb{R}^m$ then $\hat{H}(\lambda) \succ 0$.

Writing out the definition of the SDP exact region for \eqref{eq:M_inverse} we have
\begin{align*}
    M^{-T} c - \sum_{i=1}^m \lambda_i M^{-T} a_i + M^{-T} H(\lambda) M \hat{x} &= 0 \qquad \text{for } \hat{x} \in \mathcal{R}_{MF} 
\end{align*}
Since $\hat{x} = Mx$ for some $x \in \mathcal{V}_\mathbb{R}(F)$ we have 

\begin{align*}
    M^{-T}(c - \sum_{i=1}^m \lambda_i a_i + H(\lambda)x) = 0
\end{align*}
Since $M^{-T}$ is full rank, if $(C,c) \in \mathcal{R}_F$, then $(M^{-T}CM^{-1}, M^{-T}c) \in \mathcal{R}_{M \cdot F}$.
\end{proof}

We wish to translate \Cref{thm:affine_sdp} into a statement about the structure of $\mathcal{R}_F$. To do this, we remark that the authors in \cite{cifuentes2020the} observe that $\mathcal{R}_F$ can always be expressed as the union of spectrahedral shadows, namely one for each $x \in \mathcal{V}_{\RR}(F)$. To see this, observe that for fixed $x \in \mathcal{V}_\mathbb{R}(F)$, \Cref{def:sdpExact} defines a spectrahedron in the space $(\lambda, C, c)$. By projecting this spectrahedron onto the space $(C,c)$, for each $x \in \mathcal{V}_\mathbb{R}(F)$ we then get a spectrahedral shadow, $S_x$. We can then write $\mathcal{R}_F = \bigcup_{x \in \mathcal{V}_\mathbb{R}(F)} S_x $. 

Suppose a finite group $\mathfrak{G} \subset GL_n(\RR)$ acts on $F = (f_1,\ldots,f_m )$. The above observation and \Cref{thm:affine_sdp} then describe a way to partition these spectrahedral shadows via the action of $\mathfrak{G}$.

\begin{cor}
Consider a group $\mathfrak{G} \subseteq GL_n(\mathbb{R}) $ that acts on $F$. The spectrahedral shadows, $S_x$ and $S_{g\cdot x}$, defined as the SDP exact regions corresponding to $x$ and $g \cdot x$ are in bijection via the mapping $(C,c) \to (g^{-T}Cg^{-1}, g^{-T}c)$. Moreover, $\mathfrak{G}$ partitions $\mathcal{R}_F$ as
\[\mathcal{R}_F = \bigcup_{x \in \mathcal{V}_{\RR}(F)^\mathfrak{G}} \{ S_{g \cdot x} \ : \ g \in \mathfrak{G}\}. \]
\end{cor}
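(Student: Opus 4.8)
The plan is to build the statement directly on the decomposition $\mathcal{R}_F = \bigcup_{x \in \mathcal{V}_{\RR}(F)} S_x$ recorded just before the corollary, together with \Cref{thm:affine_sdp}. First I would fix notation for the linear map on objective data induced by a group element: for $g \in \mathfrak{G}$, write $\Phi_g(C,c) = (g^{-T} C g^{-1}, g^{-T} c)$. This map is linear and invertible with inverse $\Phi_{g^{-1}}$, and it is exactly the map appearing in \Cref{thm:affine_sdp} with $M = g$. Recall that $S_x$ is the image, under projection to the $(C,c)$-coordinates, of the spectrahedron $\{(\lambda, C, c) : H(\lambda) \succ 0, \ c - \sum_{i=1}^m \lambda_i a_i + H(\lambda) x = 0\}$, so the whole argument reduces to tracking how $\Phi_g$ moves this spectrahedron as the point index changes from $x$ to $g \cdot x$.

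The heart of the argument is to show that $\Phi_g$ carries the single shadow $S_x$ onto the single shadow $S_{g \cdot x}$. For the forward inclusion I would take $(C,c) \in S_x$ with witnessing multiplier $\lambda$, so that $H(\lambda) \succ 0$ and $c - \sum_i \lambda_i a_i + H(\lambda) x = 0$. Since $\mathfrak{G}$ acts on $F$, the point $g \cdot x$ again lies in $\mathcal{V}_{\RR}(F)$, and the invariance of $F$ forces the defining quadratic data to satisfy $g^{-T} A_i g^{-1} = A_i$ and $g^{-T} a_i = a_i$. Using these one computes $g^{-T} C g^{-1} - \sum_i \lambda_i A_i = g^{-T} H(\lambda) g^{-1} \succ 0$, because positive definiteness is preserved under congruence by the invertible matrix $g^{-1}$, and then
\[ g^{-T} c - \sum_i \lambda_i a_i + \bigl(g^{-T} H(\lambda) g^{-1}\bigr)(g \cdot x) = g^{-T}\Bigl(c - \sum_i \lambda_i a_i + H(\lambda) x\Bigr) = 0, \]
so $\Phi_g(C,c) \in S_{g \cdot x}$ with the same multiplier $\lambda$. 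Applying the identical computation to $g^{-1}$ gives $\Phi_{g^{-1}}(S_{g \cdot x}) \subseteq S_x$, and since $\Phi_{g^{-1}} = \Phi_g^{-1}$ this upgrades both inclusions to the claimed bijection $\Phi_g : S_x \to S_{g \cdot x}$. Alternatively, this step can be packaged through \Cref{thm:affine_sdp}, which already records that $\Phi_g$ sends the exact region of $F$ to that of $g \cdot F$; one then only needs that the invariance of $F$ identifies $g \cdot F$ with $F$ at the level of shadows.

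With the per-shadow bijection in hand, the partition formula is bookkeeping with orbits. The orbits $\mathcal{O}_x$ partition $\mathcal{V}_{\RR}(F)$, so choosing one representative per orbit gives $\mathcal{V}_{\RR}(F) = \bigcup_{x \in \mathcal{V}_{\RR}(F)^{\mathfrak{G}}} \{ g \cdot x : g \in \mathfrak{G}\}$. Substituting this indexing into $\mathcal{R}_F = \bigcup_{x \in \mathcal{V}_{\RR}(F)} S_x$ and replacing each $S_{g \cdot x}$ by $\Phi_g(S_x)$ yields the stated expression $\mathcal{R}_F = \bigcup_{x \in \mathcal{V}_{\RR}(F)^{\mathfrak{G}}} \{ S_{g \cdot x} : g \in \mathfrak{G}\}$. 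I expect the main obstacle to be the middle step, namely pinning down precisely how the hypothesis that $\mathfrak{G}$ \emph{acts on} $F$ constrains the matrices $A_i$ and vectors $a_i$: the clean computation above uses the strong identities $g^{-T} A_i g^{-1} = A_i$ and $g^{-T} a_i = a_i$, whereas the stated invariance is phrased as a condition on the variety, so care is needed either to justify these stronger identities in the relevant setting or to route the argument through \Cref{thm:affine_sdp} and the earlier results describing when $g \cdot F$ and $F$ share the same SDP exact region.
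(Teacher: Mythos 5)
Your plan --- the decomposition $\mathcal{R}_F=\bigcup_{x\in\mathcal{V}_{\RR}(F)}S_x$, the per-shadow map $\Phi_g(C,c)=(g^{-T}Cg^{-1},g^{-T}c)$, and the orbit bookkeeping --- is exactly the argument the paper intends: the corollary appears there with no separate proof, as an immediate consequence of the shadow decomposition and \Cref{thm:affine_sdp}, and your congruence computation is a correct, explicit unfolding of that argument. The partition step at the end is also fine.

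The step you flagged in your last paragraph, however, is a genuine gap rather than a technicality, and your main computation as written does not close it. Under the paper's definition, ``$\mathfrak{G}$ acts on $F$'' means only that $F(g\cdot x)=0$ for all $x\in\mathcal{V}_{\CC}(F)$; this does \emph{not} force $g^{T}A_ig=A_i$ and $g^{T}a_i=a_i$, and without some extra hypothesis the corollary itself can fail. Concretely, take $F=(x^2-1,\,xy-1)$ from \Cref{ex:1} and $\mathfrak{G}=\{I,g\}$ where $g$ is the coordinate swap: the variety $\{(1,1),(-1,-1)\}$ (and even the ideal $\mathcal{I}(F)$) is swap-invariant, and $g$ fixes the point $(1,1)$, so the corollary would assert $\Phi_g(S_{(1,1)})=S_{(1,1)}$. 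But the defining matrix of $S_{(1,1)}$ computed in \Cref{ex:1} involves only $c_1,c_2,c_{22}$ and requires $c_{22}>0$, while $\Phi_g$ swaps $c_{11}\leftrightarrow c_{22}$ and $c_1\leftrightarrow c_2$; hence any $(C,c)\in S_{(1,1)}$ with $c_{11}\leq 0$ has $\Phi_g(C,c)\notin S_{(1,1)}$. So the statement needs more than variety-level invariance, and your two proposed repairs are precisely the right ones. Either assume each $f_i\circ g=f_i$ as a polynomial --- this holds in every example the paper treats, e.g.\ \eqref{QBP} under sign changes, and it validates your computation verbatim --- or assume each $f_i\circ g$ lies in $\mathcal{I}(F)$ (automatic from variety invariance when the ideal is radical) together with the complete-intersection hypothesis of \Cref{cor:sdp_welldefined}; then each $f_i\circ g$ is an $\RR$-linear combination $L\cdot F$, the proof of \Cref{thm:sdp_under_linear} identifies shadows at the same point ($S^{g\cdot F}_p=S^F_p$ via $\hat\lambda=L^{-T}\lambda$), and composing with \Cref{thm:affine_sdp} gives the bijection $S_x\to S_{g\cdot x}$. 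Note the counterexample above violates that complete-intersection hypothesis, as $\mathcal{V}_{\CC}(x^2,xy)$ has dimension $1$, which is consistent. The imprecision is inherited from the paper's statement, but a complete proof must add one of these hypotheses, so you were right not to wave it away.
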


\begin{example}\label{ex:2}
Consider the optimization problem 
\begin{align*}
\min_{x_1,x_2 \in \mathbb{R}} \ x_2^2 + \cij_{12}x_1x_2 + 2\ck_1x_1 \quad \text{subject to} \quad x_1^2 =1.
\end{align*}
From \Cref{ex:sym}, we know the group $\mathfrak{G} = \left \{ \begin{pmatrix}
1 & 0 \\ 0 & 1 
\end{pmatrix}, \begin{pmatrix}
-1 & 0 \\ 0 & -1 
\end{pmatrix} \right \}$ acts on $F$ and partitions $\mathcal{V}_\mathbb{R}(F)$ into orbits of size two. This symmetry is seen in \Cref{fig:semialgSets} as $\mathcal{R}_F$ is symmetric around the $\ck_1$ and $\cij_{12}$ axes.
\hfill$\diamond$
\end{example}

\begin{figure}[h!]
    \centering
    \includegraphics[width = 0.4\textwidth]{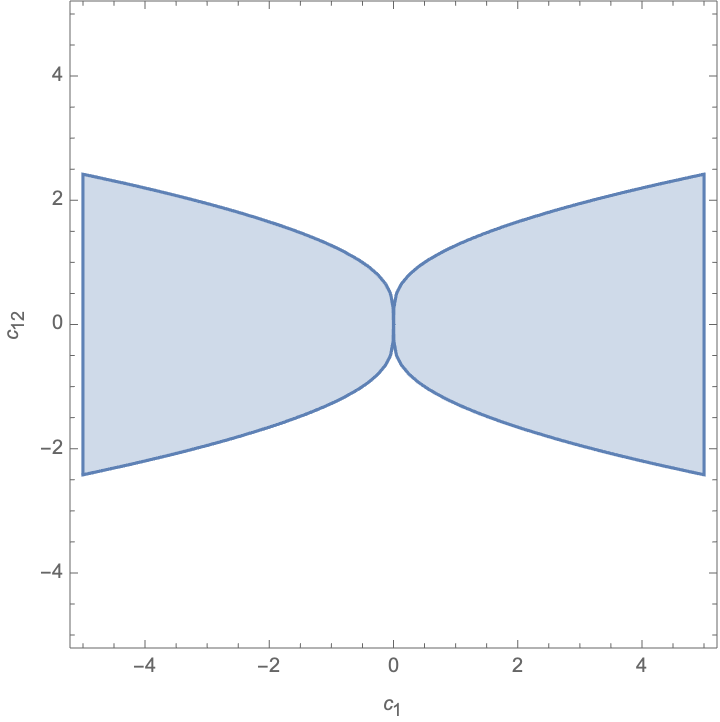}
    \caption{$\mathcal{R}_F$ from the optimization problem defined in \Cref{ex:2}.}
    \label{fig:semialgSets}
\end{figure}

\section{Binary quadratic programs}\label{sec:4}
In this final section we leverage our new understanding of invariants of SDP exact regions to solve quadratic binary programs. As a by-product, we develop a heuristic for tackling an NP hard problem with computational results to motivate future research.

\subsection{SDP exact regions of binary quadratic programs}
We consider the family of quadratic programs with feasible set $\pmOne^n$. These are programs of the~form:
\begin{align}
    \min_{x \in \mathbb{R}^n} \ x^T C x +  2c^T x \quad \text{subject to} \quad f_i(x):= x_i^2 -1 = 0, \ i \in [n]. \tag{BQP} \label{QBP}
\end{align}
Binary quadratic programs model a wide class of combinatorial optimization problems and in general are NP hard. Multiple solution methods have been proposed for this class of problems, including the famous SDP approximation algorithm by Goemans and Williamson which gives a polynomial time algorithm to the max-cut problem that returns a solution with an optimality gap of at least $0.878$  \cite{goemans1995improved}. For a survey on relevant approaches and applications, see \cite{kochenberger2014the}.

In this section we wish to understand the set of $(C,c)$ for which the Shor relaxation of \eqref{QBP}  is exact. First, by \Cref{cor:sdp_welldefined} the SDP exact region of \eqref{QBP} is invariant under any choice of generators that have $\pmOne^n$ as its complex variety. Next, using \Cref{thm:affine_sdp} we are able to succinctly classify the SDP exact region of \eqref{QBP}.

\begin{thm}\label{thm:qbp_exact}
The SDP exact region of \eqref{QBP} consists of $2^n$ spectrahedra, which are invertible linear transformations of 

\[
\mathcal{S} = \{(C,c) \ : \ \begin{bmatrix}
- (c_1 + \sum_{i=2}^n c_{1i}) & c_{12} & \cdots & c_{1n} \\
c_{12} & - (c_2 + c_{12} + \sum_{i=3}^n c_{2i}) & \cdots & c_{2n} \\
\vdots & & \ddots & \vdots \\
c_{1n} & &  & - (c_n + \sum_{i=1}^{n-1} c_{in} )
\end{bmatrix} \succ 0 \}.
\]
Specifically, using variable ordering $(c_{1,2},\ldots, c_{n-1,n}, c_1,\ldots,c_n)$, we can write $\mathcal{R}_F = \{g \cdot \mathcal{S} \ : \ g \in \mathfrak{G}\}$ where $\mathfrak{G} \subset GL_{\frac{n(n+1)}{2}}(\mathbb{R})$ is the matrix group 
\[
\mathfrak{G} = \{ \Diag(x_1 x_2 , \ldots, x_1x_n,x_2x_3,\ldots,x_2x_n,\ldots , x_{n-1}x_n, x_1,\ldots, x_n ) : x \in \pmOne^n \}.
\]
Moreover, $\mathfrak{G}$ is isomorphic to  $\underbrace{\mathbb{Z}_2 \times \cdots \times \mathbb{Z}_2}_n$.
\end{thm}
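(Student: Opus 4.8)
The plan is to evaluate \Cref{def:sdpExact} explicitly for \eqref{QBP}, then use the sign-flip symmetry of $\pmOne^n$ together with \Cref{thm:affine_sdp} to collapse every piece onto a single spectrahedron, and finally read off the group. First I would record the matrix data: the constraint $f_i = x_i^2-1$ gives $A_i = e_ie_i^T$, $a_i = 0$ and $\alpha_i = -1$, so the Hessian is $H(\lambda) = C - \Diag(\lambda_1,\dots,\lambda_n)$ and the feasible variety is $\mathcal{V}_\mathbb{R}(F) = \pmOne^n$, a set of $2^n$ isolated points. For a fixed point $x \in \pmOne^n$, the stationarity equation of \Cref{def:sdpExact} reduces (since $a_i=0$) to $\Diag(\lambda)x = c + Cx$, and because every $x_i \neq 0$ this determines $\lambda$ uniquely via $\lambda_i = x_i\bigl(c_i + (Cx)_i\bigr)$. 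Thus each $x$ contributes one spectrahedron $S_x = \{(C,c) : H(\lambda)\succ 0\}$, and $\mathcal{R}_F = \bigcup_{x\in\pmOne^n} S_x$.

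Next I would substitute the unique $\lambda$ back into $H(\lambda)$. The off-diagonal $(i,j)$ entry stays $C_{ij}$, while the $(i,i)$ entry becomes $-x_ic_i - \sum_{j\neq i} C_{ij}x_ix_j$; the key feature is that $C_{ii}$ cancels. Hence membership in $S_x$ depends only on the off-diagonal entries of $C$ and on $c$, i.e.\ on $n(n+1)/2$ coordinates, which explains why $\mathfrak{G}\subset GL_{n(n+1)/2}(\mathbb{R})$. Specializing to $x = \mathbf{1}$ reproduces exactly the matrix in the statement, so $\mathcal{S} = S_{\mathbf{1}}$.

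Then I would exploit the symmetry. For $D = \Diag(x)$ with $x\in\pmOne^n$ one has $f_i(Dy) = (x_iy_i)^2-1 = y_i^2-1$, so $D\cdot F = F$ and $\mathcal{R}_{D\cdot F} = \mathcal{R}_F$. Applying \Cref{thm:affine_sdp} with $M=D$ and using $D^{-T} = D^{-1} = D$ shows $S_x = g_x\cdot\mathcal{S}$, where $g_x$ is the coordinate form of $(C,c)\mapsto (DCD,Dc)$: since $(DCD)_{ij} = x_ix_jC_{ij}$ and $(Dc)_i = x_ic_i$, in the ordering $(c_{1,2},\dots,c_{n-1,n},c_1,\dots,c_n)$ this is precisely $\Diag(x_1x_2,\dots,x_{n-1}x_n,x_1,\dots,x_n)\in\mathfrak{G}$. (Equivalently, the matrix defining $g_x\cdot\mathcal{S}$ is $D$-congruent to the one defining $S_x$, and congruence preserves positive definiteness.) Because the sign-flip group acts simply transitively on $\pmOne^n$, the variety is a single orbit and $\mathcal{R}_F = \bigcup_{g\in\mathfrak{G}} g\cdot\mathcal{S}$ is a union of $2^n$ invertible linear images of $\mathcal{S}$.

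Finally, the assignment $x\mapsto\Diag(x_1x_2,\dots,x_1,\dots,x_n)$ is a homomorphism from $(\pmOne^n,\cdot)\cong\mathbb{Z}_2^n$, since its entries multiply as $(x_ix_j)(y_iy_j) = (x_iy_i)(x_jy_j)$ and $(x_i)(y_i)=x_iy_i$, matching componentwise multiplication; it is injective because its last $n$ diagonal entries recover $x$. Hence $\mathfrak{G}\cong\mathbb{Z}_2^n$. The step I expect to require the most care is the substitution above — verifying that $C_{ii}$ cancels and that the surviving diagonal expressions match those in $\mathcal{S}$ under the stated index conventions — while everything else is either a direct appeal to \Cref{thm:affine_sdp} or elementary group theory.
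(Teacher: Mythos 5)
Your proposal is correct and follows essentially the same route as the paper's proof: evaluate \Cref{def:sdpExact} directly to get the $2^n$ spectrahedra $H(x)\succ 0$, $x\in\pmOne^n$ (the paper records exactly your matrices, with the same $C_{ii}$ cancellation), then apply \Cref{thm:affine_sdp} with the sign-flip matrices $\Diag(x)$ to identify each piece with $\mathcal{S}$, and read off that $\mathfrak{G}\cong\mathbb{Z}_2^n$. You simply supply the details the paper compresses into ``by direct computation'' and ``it is clear'' — the unique $\lambda$, the congruence verification, and the injective-homomorphism argument for the group — so there is no gap.
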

\begin{proof}
By direct computation we see that $\mathcal{R}_F$ consists of the union of $2^n$ spectrahedra, each one given by 
\begin{align}
H(x) = \begin{bmatrix}
- \frac{1}{x_1}(c_1 + \sum_{i=2}^n c_{1i}x_i) & c_{12} & \cdots & c_{1n} \\
c_{12} & - \frac{1}{x_2}(c_2 + c_{12}x_1 + \sum_{i=3}^n c_{2i}x_i) & \cdots & c_{2n} \\
\vdots & & \ddots & \vdots \\
c_{1n} & &  & - \frac{1}{x_n}(c_n + \sum_{i=1}^{n-1} c_{in} x_i)
\end{bmatrix} \succ 0 \label{H_QBP}
\end{align}
for $x \in \pmOne^n$. 
Using \Cref{thm:affine_sdp} we see each spectrahedron is linearly equivalent to $\mathcal{S}$ where the linear map is defined is defined by an element of $\mathfrak{G}$. It is clear that $\mathfrak{G}$ consists of $2^n$ elements, each with order two, therefore, $\mathfrak{G} \cong \mathbb{Z}_2 \times \cdots \times \mathbb{Z}_2$.
\end{proof}

\begin{example}
Consider when $n = 2$. 
The SDP exact region of \eqref{QBP} for $n=2$ consists of $4$ spectrahedra in $\sym_2 (\mathbb{R}) \times \RR^2 \cong \RR^5$. Observe that the diagonal entries of $C \in \sym_2(\RR)$ do not affect SDP-exactness, therefore we consider $\mathcal{R}_F$ in the three dimensional subspace whose coordinates are $(c_{12},c_1,c_2)$, 
as shown in \Cref{fig:sdp_exact_n_2}. These spectrahedra are 
$H(x)\succ 0$ for 
$x \in \{ (-1,-1), (1,1), (-1,1),(1,-1) \}$. Using the notation in \eqref{H_QBP},

\begin{figure}[h!]
    \centering
    \includegraphics[width = 0.4 \textwidth]{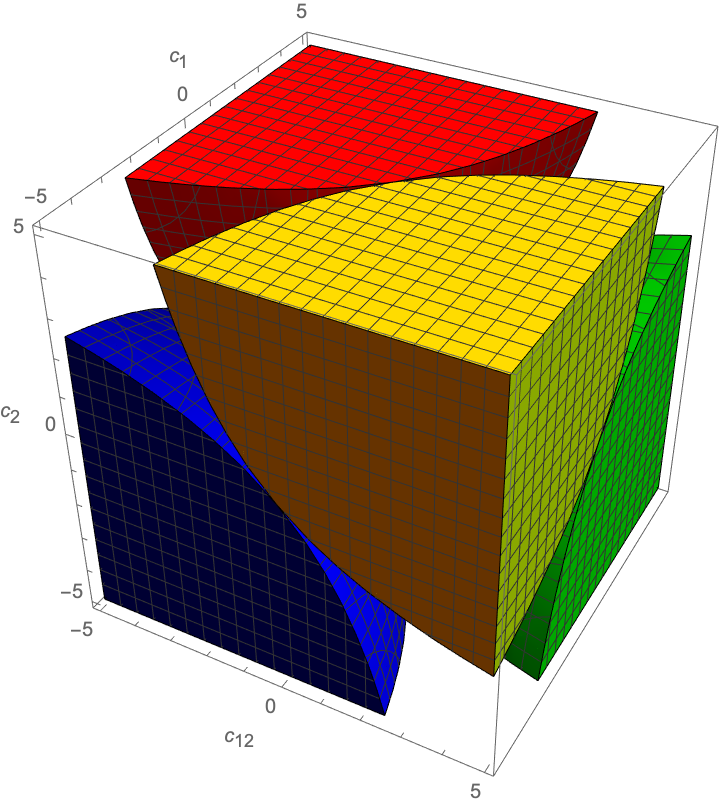}
    \caption{$\mathcal{R}_F$ for \eqref{QBP} when $n=2$.}
    \label{fig:sdp_exact_n_2}
\end{figure}

\begin{align*}
    H(1,1) &= \begin{bmatrix} -c_{12} - c_1& c_{12} \\
    c_{12}& -c_{12} - c_2 \end{bmatrix} \qquad &&H(-1,1) = \begin{bmatrix} c_{12} + c_1 &  c_{12} \\ c_{12} &  c_{12} - c_2 \end{bmatrix} \\
    H(-1,-1) &= \begin{bmatrix} -c_{12} + c_1&  c_{12} \\ c_{12} & -c_{12} + c_2 \end{bmatrix} \qquad &&H(1,-1) = \begin{bmatrix} c_{12} - c_1 & c_{12} \\ c_{12} & c_{12} + c_2 \end{bmatrix},
\end{align*}
and we
call the spectrahedra corresponding to $H(x) \succ 0$,  $S_{1,1},S_{-1,1},S_{-1,-1},S_{1,-1}$. 
By \Cref{thm:qbp_exact}
every spectrahedron is linearly equivalent to $S_{1,1}$. 
Observe
\begin{align*}
    S_{-1,-1} &= \begin{bmatrix} 1 & 0 & 0 \\ 0 & -1 & 0 \\ 0 & 0 & -1 \end{bmatrix} \cdot  S_{1,1}, \qquad 
    S_{-1,1} = \begin{bmatrix} -1 & 0 & 0 \\ 0 & -1 & 0 \\0 & 0 & 1 \end{bmatrix} \cdot S_{1,1}, \qquad
    S_{1,-1} = \begin{bmatrix} -1 & 0 & 0 \\ 0 & 1 & 0 \\ 0 & 0 & -1 \end{bmatrix} \cdot S_{1,1}.
\end{align*}
We see that the set of matrices transforming $S_{1,1}$ to the other spectrahedron form a group. Note that $\begin{bmatrix} c_{11} & c_{12} \\ c_{12} & c_{22} \end{bmatrix} \succ 0$ and $\begin{bmatrix} c_{11} & -c_{12} \\ -c_{12} & c_{22} \end{bmatrix} \succ 0$ define the interior of the same spectrahedron. In this case $\mathcal{R}_F = \{x \cdot S_{1,1} \ : \ x \in \mathfrak{G} \}$ where
\[
\mathfrak{G} = \left \{\begin{bmatrix} 1 & 0 & 0 \\ 0 & 1 & 0 \\ 0 & 0 & 1 \end{bmatrix}, \begin{bmatrix} 1 & 0 & 0 \\ 0 & -1 & 0 \\ 0 & 0 & -1 \end{bmatrix}, \begin{bmatrix} -1 & 0 & 0 \\ 0 & -1 & 0 \\0 & 0 & 1 \end{bmatrix}, \begin{bmatrix} -1 & 0 & 0 \\ 0 & 1 & 0 \\ 0 & 0 & -1 \end{bmatrix} \right \} \cong \mathbb{Z}_2 \times \mathbb{Z}_2.
\]

A picture of $\mathcal{R}_F$ is shown in \Cref{fig:sdp_exact_n_2} where the red region is $S_{1,1}$, the green region is $S_{-1,1}$, the yellow region is $S_{1,-1}$ and the blue region is $S_{-1,-1}$.  
\hfill$\diamond$

\end{example}

\subsection{Finding candidate solutions to \eqref{QBP}}
Recall from \Cref{thm:qbp_exact} that determining if \eqref{QBP} has an exact SDP relaxation is equivalent to determining if $H(x) \succ 0$ for some $x \in \pmOne^n$ where $H(x)$ is as defined in \eqref{H_QBP}. Note that in general, testing whether a given objective function leads to an exact relaxation of \eqref{QP} for binary optimization problems is NP hard \cite{laurent1995on}. 

A necessary condition for $H(x) \succ 0 $ is for the diagonal entries of $H(x)$ to be positive. Observe that these entries are linear, so for each $x \in \pmOne^n$ we have a corresponding polyhedral cone, $P_x$, such that $P_x\supset S_x$. Specifically for each $x \in \pmOne^n$, we define 
\begin{align*}
P_x &= \{(C,c) \in \sym_n(\RR) \times \RR^n \ : \ -\frac{1}{x_j}(c_j + \sum_{\substack{i=1\\i \neq j}}^n c_{ij}x_i)>0 \ \text{ for all } j \in [n] \}  \\
&= \{(C,c) \in \sym_n(\RR) \times \RR^n \ : \ - \frac{1}{x_1}(c_1 + \sum_{i=2}^n c_{1i}x_i) >0, \ \ldots, - \frac{1}{x_n} (c_n + \sum_{i=1}^{n-1} c_{in} x_i)>0 \}
\end{align*}
Checking if $x \in P_x$ amounts to checking if $n$ linear equations are positive. By taking the sign of each entry, for each $P_x$ we can then associate an $n$-tuple, $v \in \pmOne^n$.

Aside from using $P_x$ to determine if a particular instance of \eqref{QBP} has an exact SDP relaxation, understanding for which $x \in \pmOne^n$, $(C,\ck) \in P_x$ is important in terms of globally solving \eqref{QBP}. This is explained in the following proposition.

\begin{prop}\label{prop:Px_global}
Let $x^* \in \pmOne^n$ be a unique global minimizer to \eqref{QBP}. Then $(C,\ck) \in P_{x^*}$.
\end{prop}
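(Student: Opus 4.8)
The plan is to translate the global optimality of $x^*$ into a collection of sign conditions, one per coordinate, and then to recognize these as exactly the inequalities defining $P_{x^*}$. The key point is that a \emph{unique} global minimizer over the discrete set $\pmOne^n$ must in particular be strictly better than each of its single-coordinate-flip neighbors, and the change in objective under such a flip is governed by precisely the diagonal entries of $H(x^*)$ appearing in \eqref{H_QBP}.

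First I would record that on $\pmOne^n$ the diagonal terms of $x^T C x$ are constant, since $c_{ii} x_i^2 = c_{ii}$, and hence do not affect the minimizer. Writing $f(x) = x^T C x + 2 c^T x$ and isolating, for a fixed $j$, the part of $f$ depending on the single coordinate $x_j$ (with all other coordinates held fixed), I would find it equals
\[
2 x_j \left( c_j + \sum_{i \neq j} c_{ij} x_i \right).
\]
For $x \in \pmOne^n$ let $x^{(j)}$ denote the point obtained by negating the $j$-th coordinate of $x$. Since negating $x_j$ leaves every other coordinate unchanged, only this single-coordinate part is affected, and a direct computation gives
\[
f(x^{(j)}) - f(x) = -4 x_j \left( c_j + \sum_{i \neq j} c_{ij} x_i \right).
\]

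Next, because $x^*$ is the \emph{unique} global minimizer and each neighbor $x^{*(j)}$ is a distinct feasible point of $\pmOne^n$, I would invoke $f(x^{*(j)}) > f(x^*)$ strictly for every $j \in [n]$. Applying the displayed identity at $x = x^*$ yields
\[
-4 x_j^* \left( c_j + \sum_{i \neq j} c_{ij} x_i^* \right) > 0, \qquad j \in [n].
\]
Dividing by $4$ and using $(x_j^*)^2 = 1$ converts each inequality into
\[
-\frac{1}{x_j^*}\left( c_j + \sum_{i \neq j} c_{ij} x_i^* \right) > 0, \qquad j \in [n],
\]
which is verbatim the defining system of $P_{x^*}$; hence $(C,c) \in P_{x^*}$.

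The computation itself is routine; the subtlety I would flag is the role of uniqueness. Without it one obtains only the weak inequalities $f(x^{*(j)}) \geq f(x^*)$, placing $(C,c)$ in the closure of $P_{x^*}$ rather than in the open cone itself. Some care is also needed in bookkeeping the symmetric off-diagonal contributions so that the single-flip objective change lines up exactly with the $j$-th diagonal entry of $H(x^*)$ as written in \eqref{H_QBP}.
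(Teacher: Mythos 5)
Your proof is correct and takes essentially the same approach as the paper's: both arguments compare $x^*$ against each single-coordinate-flip neighbor, use uniqueness of the minimizer to get strict inequalities, and identify the resulting sign conditions as exactly the $n$ defining inequalities of $P_{x^*}$. Your isolation of the $x_j$-dependent part of the objective is just a streamlined version of the paper's term-by-term expansion and cancellation, and your remark about uniqueness (weak inequalities only place $(C,c)$ in the closure of the open cone) matches the paper's own remark following the proposition.
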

\begin{proof}
If $x^*$ is a unique global minimizer to \eqref{QBP}, then $(x^*)^TCx^* + 2c^T x^* < x^T C x + 2c^T x$ for all $x \in \pmOne^n \backslash x^*$.
Recall that
\[P_{x^*} = \left \{(C,c) \ : \  - \frac{1}{x^*_1}(c_1 + \sum_{i=2}^n c_{1i}x^*_i) >0 , \ldots  , - \frac{1}{x^*_n}(c_n + \sum_{i=1}^{n-1} c_{in} x_i^*)>0 \right \} \]
Specifically, the $k$th inequality in $P_{x^*}$ is of the form:
\[
- \frac{1}{x_k^*}(c_k+ \sum_{\substack{i=1 \\i \neq k}}c_{ik}x_i^*) >0.
\]
Consider $y \in \pmOne^n$ that differs from $x^*$ in only the $k$th component. Then 
\begin{align*}
   & (x^*)^T C x^* + 2 c^T x^* < y^T C y + 2c^T y \\  &\Rightarrow \sum_{i=1}^n x_i^* c_i + \sum_{i=1}^n \sum_{j=i+1}^n c_{ij}x_i^* x_j^* < \sum_{i=1}^n y_i c_i + \sum_{i=1}^n \sum_{j=i+1}^n c_{ij}y_i y_j \\
    &\Rightarrow \sum_{i=1}^n x_i^* c_i + \sum_{i=1}^n \sum_{j=i+1}^n c_{ij}x_i^* x_j^* <\sum_{\substack{i=1 \\ i\neq k}}^n x_i^* c_i - x_k^* c_k + \sum_{\substack{i=1 \\ i\neq k}}^n \sum_{j=i+1}^n c_{ij}x_i^* x_j^* - \sum_{\substack{i=1 \\ i \neq k}}^n x_i^*x_k^* c_{ik} \\
    &\Rightarrow  - x_k^* c_k - \sum_{\substack{i=1 \\ i \neq k}}^n x_i^* x_k^* c_{ik} > 0 \\
    &\Rightarrow -x_k^* (c_k + \sum_{\substack{i=1 \\ i \neq k}}^n c_{ik}x_i^*) >0.
\end{align*}
Since $x^* \in \pmOne^n$, this is equivalent to the $k$th inequality of $P_{x^*}$.
\end{proof}

\begin{rem}
Observe that in \Cref{prop:Px_global} we assume $x^*$ is a \emph{unique} global minimizer. We see this assumption is necessary since our definition of $P_x$ is that of an open cone. Throughout this section we consider objective functions for which $x^T C x + 2 c^T x \neq y^T C y + 2 c^T y$ for any distinct $x,y \in \pmOne^n$. We refer to such objective functions as \emph{generic} since the set of objective functions we disregard lie on a proper Zariski closed set. In other words, the objective functions we exclude lie on a set of measure zero in $\symnr \times \RR^n$.
\end{rem}

\begin{algorithm}[h!]
\KwIn{
$(C,c) \in \symnr \times \mathbb{R}^n$
}
\KwOut{
   $x \in \pmOne^n$ such that $(C,c) \in P_x$
    }
\begin{enumerate}
    \item Fix $x \in \pmOne^n$.
    \item\label{alg:stepP} If $(C,c) \in P_{x}$, terminate and return $x$.
    \item If $(C,c) \not\in P_{x}$ then let $i^*$ be the first index of $v \in \mathbb{R}^n$ such that $v_{i^*}<0$. Let $\hat{x} \in \pmOne^n$ be the vector such that 
    \[
    \hat{x}_j = \begin{cases}
    x_j & \text{ for } j \neq i^* \\
    - x_j & \text{ for } j = i^*.
    \end{cases}
    \] \label{alg:step2}
    \item Replace $x$ with $\hat x$ and return to Step~\ref{alg:stepP}.
\end{enumerate}
\caption{Find $x\in \pmOne^n$ where $(C,c) \in P_x$}
\label{alg:Px}
\end{algorithm}

\Cref{prop:Px_global} shows the importance of understanding all $x \in \pmOne^n$ where $(C,\ck) \in P_x$.
\Cref{alg:Px} gives a method to find one $x \in \pmOne^n$ where $(C,c) \in P_x$. Every time Step~\ref{alg:step2} is repeated, we jump from $x \in \pmOne^n$ to $\hat{x} \in \pmOne^n$ where $\hat{x}$ only differs from $x$ in the $i$th coordinate. 

To follow the path this algorithm takes, we consider the bipartite graph, $G$, consisting of nodes $V_E, V_O$. Each $x \in \pmOne^n$ that differs from $(1,\ldots,1)$ in an even number of places defines a node in $V_E$ and each $x \in \pmOne^n$ that differs from $(1,\ldots,1)$ in an odd number of places defines a node in $V_O$. There is an edge between $x \in V_E$ and $y \in V_O$ if $x$ and $y$ differ in exactly one place. 

Each edge has a linear expression in the entries of $(C,c)$ associated to it, given by one of the inequalities of $P_x$. Namely, consider an edge $e = xy$ where $x \in V_E$ and $y \in V_O$. Suppose that $x$ and $y$ differ in the $i$th place. Then, $e$ has the $i$th inequality of $P_x$ associated to it. If this inequality is positive we direct $e$ to be pointing to the left i.e. $e = yx$. If it is negative we direct $e$ to be pointing to the right i.e. $e = xy$. For a fixed $(C,c)$, $G$ is a directed bipartite graph with $2^n$ vertices and $n \cdot 2^{n-1}$ edges.

We write the sequence \Cref{alg:Px} takes as $\{e_{i_1},\ldots,e_{i_k} \}$ where $e_{i_j}$ reflects the fact that in Step~\ref{alg:step2} we negate the $i_j$th entry of $x$. We express the vertices that \Cref{alg:Px} transverses starting from $x$ as
\[\left \{ x, x(e_{i_1}),x(e_{i_1}(e_{i_2})), \ldots, x(e_{i_1}(e_{i_2}(\cdots(e_{i_k})\cdots))\right\} .\] 
Each time we repeat Step~\ref{alg:step2}, another inequality is induced on the entries of $(C,c)$. We give an example of $G$ when $n = 2$.

\begin{example}
Consider when $n = 2$. The graph on the left shows the general bipartite structure for $n = 2$ before the objective function is fixed. 
The graph on the right show the directed version of $G$ for objective function given by $c_1 = 5, c_2 = -1, c_{12} = 3$. In this case, we see that $G$ has a unique sink at $(-1,1)$. By \Cref{prop:Px_global}, this tells us that $(-1,1)$ is the unique global optimal solution.

\begin{tikzpicture}
      \tikzset{enclosed/.style={draw, circle, inner sep=0pt, minimum size=.15cm, fill=black}}
      \tikzset{
    edge/.style={->,> = latex'}
}

      \node[enclosed, label={left, yshift=.2cm: $(1,1)$}] (11) at (0.75,3.25) {};
      \node[enclosed, label={right, yshift=0cm: $(-1,1)$}] (-11) at (4.5,3.25) {};
      \node[enclosed, label={below, xshift=.2cm: $(1,-1)$}] (1-1) at (4.5,0.75) {};
      \node[enclosed, label={left, yshift=-.2cm: $(-1,-1)$}] (-1-1) at (0.75,0.75) {};

      \draw (-1-1) -- (1-1) node[midway,  below] (edge4) {$c_1 - c_{12}$};
      \draw (11) -- (-11) node[midway, above] (edge6) {$-c_1 - c_{12}$};
      \draw (11) -- (1-1) node[near start,  left] (edge7) {$-c_2 - c_{12}$};
      \draw (-1-1) -- (-11) node[near end, right] (edge9) {$c_2 - c_{12}$};
      
    \node[enclosed, label={left, yshift=.2cm: $(1,1)$}] (112) at (8.75,3.25) {};
      \node[enclosed, label={right, yshift=0cm: $(-1,1)$}] (-112) at (12,3.25) {};
      \node[enclosed, label={below, xshift=.2cm: $(1,-1)$}] (1-12) at (12,0.75) {};
      \node[enclosed, label={left, yshift=-.2cm: $(-1,-1)$}] (-1-12) at (8.75,0.75) {};

      \draw[edge]  (1-12) -- (-1-12) node[below, midway] (edge5) {$2$};
      \draw[edge] (112) -- (-112) node[midway, above] (edge6) {$-8$};
      \draw[edge] (112) -- (1-12) node[near start,  left] (edge7) {$-2 \ $};
      \draw[edge] (-1-12) -- (-112) node[near end, right] (edge9) {$-4$};

\end{tikzpicture}

\hfill$\diamond$
\end{example}

For \Cref{alg:Px} to terminate, $G$ needs to be acyclic. Before we show this, we need a lemma that helps to keep track of the inequalities induced by Step~\ref{alg:step2} of \Cref{alg:Px}. For ease of notation and without loss of generality, we now assume we initiate \Cref{alg:Px} at $x = (1,\ldots,1)$.

\begin{lem}\label{lem:ineq}
Consider a sequence $e_{i_1},\ldots, e_{i_k}$ starting from $x = (1,\ldots,1)$. Let $s_i$ be the number of times $e_i$ appears in this sequence.  Suppose $e_j$ appears next in the sequence, then this induces the inequality
\begin{align}\label{alg:inequality}
(-1)^{s_j} c_j +  \sum_{\substack{i=1 \\ i \neq j}}^n (-1)^{s_i + s_j} c_{ij} > 0.
\end{align}
\end{lem}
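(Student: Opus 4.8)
The plan is to track the algorithm's iterate after the partial sequence $e_{i_1},\dots,e_{i_k}$, write down the single inequality on $(C,c)$ that the step $e_j$ imposes, and simplify it into the form \eqref{alg:inequality}. The whole content is sign bookkeeping, so I would organize the argument around one clean bookkeeping claim followed by a short substitution.

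First I would establish the bookkeeping claim: starting from $x=(1,\dots,1)$, after performing $e_{i_1},\dots,e_{i_k}$ the current iterate $x\in\pmOne^n$ satisfies $x_i=(-1)^{s_i}$ for every $i\in[n]$. This is immediate by induction on $k$. The empty sequence gives $x=(1,\dots,1)=((-1)^0,\dots,(-1)^0)$, and each application of $e_i$ in Step~\ref{alg:step2} of \Cref{alg:Px} negates exactly the $i$th coordinate and increments $s_i$ by one, toggling $(-1)^{s_i}$; the order of flips is irrelevant since flipping index $i$ a total of $s_i$ times sends its coordinate to $(-1)^{s_i}$.

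Next I would recall \emph{why} $e_j$ is applied. By Step~\ref{alg:step2}, the symbol $e_j$ appears next precisely when the $j$th defining inequality of $P_x$ fails at the current iterate, that is
\[
-\frac{1}{x_j}\Big(c_j + \sum_{\substack{i=1\\ i\neq j}}^n c_{ij}\, x_i\Big) < 0 .
\]
This failure is exactly the constraint on $(C,c)$ that the step $e_j$ induces. Substituting $x_i=(-1)^{s_i}$ and $x_j=(-1)^{s_j}$ from the bookkeeping claim, using $1/(-1)^{s_j}=(-1)^{s_j}$, and multiplying through by $-1$ to reverse the inequality yields
\[
(-1)^{s_j} c_j + \sum_{\substack{i=1\\ i\neq j}}^n (-1)^{s_i+s_j} c_{ij} > 0 ,
\]
which is \eqref{alg:inequality}, completing the proof.

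I do not expect a genuine obstacle here; the argument is elementary once the coordinate-tracking claim is isolated. The only place demanding care is the sign bookkeeping, namely observing that $s_i$ flips of coordinate $i$ produce the factor $(-1)^{s_i}$ independently of interleaving, and that the reciprocal $1/(-1)^{s_j}$ collapses back to $(-1)^{s_j}$ so that the product of the $x_i$ and $x_j$ factors gives the single exponent $s_i+s_j$. I would therefore present it as a brief computation appended to the inductive bookkeeping claim rather than as a separate technical lemma.
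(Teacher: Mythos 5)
Your proof is correct and takes essentially the same approach as the paper: the paper's entire proof of this lemma is the single line ``This is by definition of $P_x$,'' and your argument just makes that definitional unwinding explicit via the coordinate-tracking claim $x_i = (-1)^{s_i}$ followed by substitution into the failed constraint $-\tfrac{1}{x_j}\bigl(c_j + \sum_{i \neq j} c_{ij}x_i\bigr) < 0$. The sign bookkeeping is right (in particular, taking $s_j$ to count only the \emph{prior} appearances of $e_j$, which is what the lemma's statement requires), so nothing further is needed.
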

\begin{proof}
This is by definition of $P_x$.
\end{proof}

\begin{thm}
For generic $(C,c) \in \symnr \times \RR^n$, \Cref{alg:Px} terminates. Moreover, for generic $(C,c)$ there exists at least one $x \in \pmOne^n$ such that $(C,c) \in P_x$. 
\end{thm}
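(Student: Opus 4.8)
The plan is to show that the objective value of \eqref{QBP} serves as a strictly decreasing potential along the iterations of \Cref{alg:Px}, so that the algorithm can neither cycle nor run forever, and then to read off the existence of a valid $x$ from the fact that the algorithm must halt at Step~\ref{alg:stepP}. Thus termination and existence will be established by a single monotonicity argument.

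First I would dispose of the well-definedness of Step~\ref{alg:step2} using genericity. Up to the nonzero factor $-1/x_j = -x_j$, the entries of the vector $v$ in \Cref{alg:Px} are the linear forms $\ell_{j,x}(C,c) = c_j + \sum_{i \neq j} c_{ij} x_i$, of which there are only finitely many as $(j,x)$ ranges over $[n]\times\pmOne^n$. Each $\ell_{j,x}$ is a nonzero linear form in $(C,c)$, so its zero set is a hyperplane; deleting the finite union of these hyperplanes leaves a dense Zariski-open, full-measure set of generic $(C,c)$ on which every $v_j \neq 0$. On this set, $(C,c) \notin P_x$ forces some $v_j < 0$, so the index $i^*$ of Step~\ref{alg:step2} always exists and the step is well-defined. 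I would also record that on this set the $2^n$ values of the objective on $\pmOne^n$ are pairwise distinct, so \eqref{QBP} has a unique global minimizer.

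The heart of the argument is a one-line computation. Writing $\phi(x) = x^TCx + 2c^Tx$ and letting $\hat{x}$ be obtained from $x$ by flipping coordinate $i^*$ as in Step~\ref{alg:step2}, only the terms of $\phi$ involving $x_{i^*}$ change, and a direct expansion (using $x_i^2 = 1$ on $\pmOne^n$) gives
\[
\phi(\hat{x}) - \phi(x) = -4\,x_{i^*}\Bigl(c_{i^*} + \sum_{i \neq i^*} c_{i\,i^*}\,x_i\Bigr) = 4\,v_{i^*}.
\]
Since Step~\ref{alg:step2} is taken only when $v_{i^*} < 0$, each iteration strictly decreases $\phi$. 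Equivalently, in the directed bipartite graph $G$ an edge is oriented $x \to \hat{x}$ precisely when the associated linear inequality defining $P_x$ (as recorded in \Cref{lem:ineq}) is violated, i.e. exactly when $\phi$ strictly decreases; hence every directed edge points toward the endpoint of smaller objective value, and $G$ is acyclic.

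Termination and existence then follow together. Because $\pmOne^n$ is finite and $\phi$ strictly decreases at every execution of Step~\ref{alg:step2}, no vertex can be revisited, so only finitely many iterations are possible and \Cref{alg:Px} must halt, necessarily at Step~\ref{alg:stepP}, which certifies $(C,c) \in P_x$ for the returned $x$. This simultaneously proves termination and produces the desired $x$; alternatively, existence can be seen independently by applying \Cref{prop:Px_global} to the unique global minimizer guaranteed above, which is the unique sink of $G$. The only real subtlety, and the main obstacle, is the genericity bookkeeping ensuring Step~\ref{alg:step2} never stalls on a zero coordinate and that strict rather than weak monotonicity holds; the potential decrease itself is immediate once the correct potential, the objective $\phi$, is identified.
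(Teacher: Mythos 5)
Your proof is correct, but it takes a genuinely different route from the paper. The paper proves acyclicity of $G$ by contradiction: it sums the inequalities of the form \eqref{alg:inequality} induced around a hypothetical cycle and shows, by a parity-of-signs bookkeeping argument (each $c_j$ and each $c_{ij}$ appears an even number of times with alternating signs, via \Cref{lem:ineq}), that the sum is identically zero, yielding $0<0$; termination and existence then follow from the directed graph being acyclic (topological sort, existence of a sink). You instead exhibit the objective $\phi(x)=x^TCx+2c^Tx$ as a strict potential: the identity $\phi(\hat{x})-\phi(x)=-4x_{i^*}\bigl(c_{i^*}+\sum_{i\neq i^*}c_{i\,i^*}x_i\bigr)=4v_{i^*}<0$ (which checks out, since $1/x_{i^*}=x_{i^*}$ on $\pmOne^n$) shows every move of \Cref{alg:Px}, equivalently every directed edge of $G$, strictly decreases $\phi$, so acyclicity and termination are immediate and existence is certified at the halting step (or, as you note, independently by \Cref{prop:Px_global} applied to the generically unique minimizer). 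Your identity is in fact the conceptual explanation of the paper's cancellation: summing the edge inequalities around a cycle is summing increments of $\phi$, which telescopes to zero. Your route buys several things the paper's does not make explicit: it identifies \Cref{alg:Px} as greedy one-flip local search on the hypercube, shows the sinks of $G$ are exactly the one-flip local minima of $\phi$ (with the global minimizer among them), gives the quantitative consequence that no vertex is revisited (hence at most $2^n$ iterations), and handles explicitly the genericity needed for Step~\ref{alg:step2} to be well defined (no $v_j$ vanishes off a finite union of hyperplanes), a point the paper's proof leaves implicit. The paper's argument, in exchange, is self-contained combinatorics requiring no identification of a potential function.
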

\begin{proof}
It suffices to show that $G$ is acylic. For the sake of contradiction, suppose there is a cycle of length $2k$ in $G$, $\mathcal{C}= \{x_1,\ldots,x_{2k} \}$. Each vertex in the cycle implies an inequality of the form \eqref{alg:inequality} on the entries of $(C,c)$. We claim the sum of these inequalities is $0$. 

Let $S = \{i_1,\ldots,i_{2k}\}$ be the set of moves used to form $\mathcal{C}$. Specifically, for $j \in [2k]$, $x_j = x_1(e_{i_1}(\cdots(e_{i_{j}}) \cdots )$. Since $\mathcal{C}$ is a cycle, every $i_j \in S$ appears an even number of times. This tells us that $c_j$ appears in an even number of inequalities. By \Cref{lem:ineq}, when it first appears it is negative, when it next appears it is positive and so on. This shows that summing these inequalities results in $c_j$ being canceled out for any $j \in S$.

Now we claim that $\cij_{ij}$ also appears in an even number of inequalities. If $i \not\in S$ and $j \not\in S$ then $\cij_{ij}$ does not appear in a single inequality. Otherwise, $\cij_{ij}$ is in an inequality every time $i$ and $j$ appear in $S$. Each time $\cij_{ij}$ appears it alternates sign, and since it appears an even number of times, summing all inequalities results in $\cij_{ij}$ being canceled out.
Therefore summing all inequalities gives $0<0$ which is a contradiction, so no cycle can exist.

Since our graph is a directed acyclic graph, there exists a topological sorting of the vertices. This means there exists a vertex with all arrows pointing in. The vertex $x$ with all arrows pointing in then satisfies $(C,c) \in P_x$. In addition, as we follow a path from some starting vertex we will not visit any vertex twice. Since there are finitely many vertices and at every iteration we move to a new, distinct vertex, this means eventually we will end up at a vertex $x$ such that $(C,c) \in P_x$.

\end{proof}

We present empirical results on how many iterations it takes to run \Cref{alg:Px} in \Cref{tab:Px_stats}. We select each $(C,c)$ randomly by sampling each element $\cij_{ij}, \ck_i$, $i,j\in [n]$,  to be 
independent and identically distributed from the standard normal distribution.
We see on average that \Cref{alg:Px} terminates in less than $n$ iterations and it always terminates in less than $2n$ iterations. This motivates \Cref{conj:1}.

\begin{table}[h!]
    \centering
    \begin{tabular}{|c|c|c|c|c|c|c|c|c|c|c|}
    \hline 
        $n$ & $10$ & $20$ & $30$ & $40$ & $50$ & $60$ & $70$ & $80$ & $90$ & $100$\\
        \hline 
        Maximum & $16$ & $29$ & $44$ & $63$ & $87$ & $99$ & $115$ & $126$ & $146$ & $171$\\
        Minimum & $1$ & $3$ & $7$ & $8$ & $15$ & $24$ & $31$ & $37$ & $47$ & $58$\\
        Mean & $6.21$ & $13.39$ & $21.81$ & $31.82$ & $42.19$ & $53.46$ & $66.06$ & $78.34$ & $91.30$ & $104.81$\\
        Median & $6$ & $13$ & $21$ & $31.5$ & $42$ & $53$ & $65$ & $77$ & $90$ & $105$\\
        $\sigma^2$ & $2.49$ & $4.42$ & $6.39$ & $8.25$ & $9.99$ & $11.94$ & $14.09$ & $15.54$ & $17.54$ & $19.22$\\
        \hline 
    \end{tabular}
    \caption{Statistics on the number of iterations it took for \Cref{alg:Px} to terminate using random $(C,c) \in \symnr \times \mathbb{R}^n$ in a trial of $1,000$.}
    \label{tab:Px_stats}
\end{table}

\begin{conj}\label{conj:1}
\Cref{alg:Px} terminates after repeating Step~\ref{alg:step2} at most $2n$ times.
\end{conj}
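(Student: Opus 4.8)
The plan is to recast \Cref{alg:Px} as coordinate descent on the objective and then bound the length of the descent path combinatorially. Writing $f(x)=x^TCx+2c^Tx$ and $g_k(x)=c_k+\sum_{i\neq k}c_{ik}x_i$, the $k$-th diagonal entry of $H(x)$ in \eqref{H_QBP} equals $d_k(x):=-x_kg_k(x)$ (using $1/x_k=x_k$ on $\pmOne^n$). A direct computation shows that flipping the $k$-th coordinate changes the objective by $\Delta_k f=-4x_kg_k(x)=4\,d_k(x)$. Hence $d_k(x)<0$ exactly when flipping coordinate $k$ strictly decreases $f$, and \Cref{alg:Px} is precisely coordinate descent under a Bland-type rule: at each step flip the \emph{lowest-indexed} improving coordinate. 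Each step strictly decreases $f$, which re-proves termination (no vertex repeats), but the goal is the sharper count in \Cref{conj:1}.

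After the reduction (made before \Cref{lem:ineq}) to starting at $x=(1,\dots,1)$, each execution of Step~\ref{alg:step2} records a satisfied strict inequality $L_t(c)>0$, where by \Cref{lem:ineq} the linear form $L_t$ has coefficients in $\{-1,0,+1\}$ determined by the parities $s_i$ of the flip counts, i.e.\ by the current sign vector $x=((-1)^{s_1},\dots,(-1)^{s_n})$. A run of length $\ell$ thus produces $\ell$ such forms, all strictly positive at the given $(C,c)$. I would attack \Cref{conj:1} along one of two routes. The combinatorial route is to prove that each coordinate is flipped at most twice; since there are $n$ coordinates this gives $\ell\le 2n$. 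The key point is that coordinate $j$ can only be flipped when $\sign(g_j)$ coincides with the current $x_j$, and immediately after the flip the opposite sign condition $x_j=-\sign(g_j)$ holds; a third flip would force $\sign(g_j)$ to oscillate $+\!\to\!-\!\to\!+$, driven only by flips of other coordinates. I would try to rule this out using the strict monotonicity of $f$ together with the lowest-index rule, which guarantees that the prefix $1,\dots,j-1$ carries no improving coordinate (all $d_i>0$) whenever $j$ is pivoted.

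The second, linear-algebraic route generalizes the acyclicity argument preceding \Cref{conj:1}: there a cycle was excluded because summing the induced inequalities around it cancels every $c_i$ and $c_{ij}$, yielding $0<0$. For a path I would instead aim to show that any run of length exceeding $2n$ contains a sub-collection of induced forms admitting a nonnegative combination equal to a nonpositive form---a Gordan/Farkas certificate---again contradicting strict feasibility. I expect the main obstacle to lie exactly here: unlike a cycle, a path gives no automatic cancellation, so one must exploit the first-index pivoting rule to force a structured linear dependence among the $\{-1,0,+1\}$-forms. This is also where the specifically \emph{quadratic} structure must enter, since for a general acyclic orientation of the cube even lowest-index descent can take exponentially many steps; the linear bound $2n$ cannot follow from order-theoretic (unique-sink) considerations alone. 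In either route the crux is bounding how far the pivot index can regress after it is advanced, equivalently bounding the number of times any single coordinate is revisited.
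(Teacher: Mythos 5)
You should first be clear that \Cref{conj:1} is stated as a \emph{conjecture}: the paper offers no proof of the $2n$ bound, only the empirical evidence of \Cref{tab:Px_stats}, so there is no proof of record to match your argument against. Your preliminary reduction is correct and genuinely useful: writing $g_k(x)=c_k+\sum_{i\neq k}c_{ik}x_i$, the $k$-th diagonal entry of \eqref{H_QBP} is $d_k(x)=-x_kg_k(x)$, flipping $x_k$ changes the objective by $4d_k(x)$, so \Cref{alg:Px} is lowest-index coordinate descent and each execution of Step~\ref{alg:step2} strictly decreases $x^TCx+2c^Tx$; this recovers the paper's termination theorem by a potential-function argument rather than the cycle-cancellation argument. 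However, neither of your two routes reaches the conjecture, and the first route cannot be repaired as stated, because its key claim --- that each coordinate is flipped at most twice --- is false.

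Concretely, take $n=3$, ignore the (irrelevant) diagonal of $C$, and set
\[
c=(2,3,2),\qquad c_{12}=2,\quad c_{13}=-1,\quad c_{23}=\tfrac{1}{10}.
\]
Starting \Cref{alg:Px} at $x=(1,1,1)$, the signs of $d_1,d_2,d_3$ force the flip sequence $1,2,1,3,1$: one checks $d_1(1,1,1)=-3$; then $d_1>0$, $d_2=-1.1$ at $(-1,1,1)$; then $d_1=-1$ at $(-1,-1,1)$; then $d_1,d_2>0$, $d_3=-0.9$ at $(1,-1,1)$; then $d_1=-1$ at $(1,-1,-1)$; and all three $d_k>0$ at $(-1,-1,-1)$, where the algorithm stops. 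Coordinate $1$ is flipped three times, and the instance is generic in the paper's sense (all eight vertex values of the objective are distinct). The oscillation you hoped to exclude is exactly what happens here: $g_1$ moves from $3$ to $-1$ to $1$, driven by the intervening flips of coordinates $2$ and $3$, and the lowest-index rule is powerless to prevent it since coordinate $1$ is always the first one examined. (The run has length $5\leq 2n=6$, so the conjecture itself survives; only your proposed lemma dies.) Consequently any proof of \Cref{conj:1} must bound the \emph{total} number of flips without a per-coordinate bound of two. Your second, Farkas-type route is not carried out: you yourself identify the needed structured dependence among the $\{-1,0,+1\}$-forms of \Cref{lem:ineq} as ``the main obstacle,'' and no candidate certificate is produced. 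As it stands, the proposal is a research plan with correct preliminaries, a refuted main lemma in route one, and an unexecuted route two; the conjecture remains open.
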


In addition, we observe in our computations that there is a limit to how many $P_x$ can intersect nontrivially. We first show when $P_x \cap P_{x'} \neq \emptyset$.

\begin{prop}\label{prop:nontrivial_Px_intersect}
For fixed $x, x' \in \pmOne^n$, $P_x \cap P_{x'} \neq \emptyset$ if and only if $x'$ differs from $x$ in more than one place.
\end{prop}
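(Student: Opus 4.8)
The plan is to work directly with the $n$ defining inequalities of each cone. Since every coordinate $x_j \in \pmOne$ satisfies $1/x_j = x_j$, the cone $P_x$ is cut out by the linear functionals
\[
\ell_j^x(C,c) := -x_j\Big(c_j + \sum_{i\neq j} c_{ij}x_i\Big), \quad j \in [n],
\]
via the conditions $\ell_j^x > 0$, and similarly for $P_{x'}$. Thus $P_x \cap P_{x'} \neq \emptyset$ is exactly the feasibility of the system of $2n$ strict linear inequalities $\{\ell_j^x > 0\}_{j} \cup \{\ell_j^{x'} > 0\}_{j}$. I will prove the two implications separately, assuming throughout that $x \neq x'$ (so that the two cones are genuinely distinct).

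For the forward implication I argue by contrapositive: suppose $x$ and $x'$ differ in exactly one coordinate $k$, so $x_k' = -x_k$ and $x_i' = x_i$ for $i \neq k$. Substituting into the $k$-th functional of $P_{x'}$ gives $\ell_k^{x'} = -x_k'(c_k + \sum_{i\neq k} c_{ik}x_i') = x_k(c_k + \sum_{i\neq k} c_{ik}x_i) = -\ell_k^x$. Hence the two constraints $\ell_k^x > 0$ and $\ell_k^{x'} > 0$ demand simultaneously $\ell_k^x > 0$ and $\ell_k^x < 0$, which is impossible, so $P_x \cap P_{x'} = \emptyset$.

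For the reverse implication, let $D = \{k : x_k \neq x_k'\}$ with $|D| \geq 2$, and I will exhibit an explicit feasible point. The guiding observation is that for indices $j \in D$ the functionals $\ell_j^x$ and $\ell_j^{x'}$ are \emph{almost} negatives of one another, so one must use the off-diagonal entries $c_{ij}$ with \emph{both} indices in $D$ to decouple them; this is precisely where the hypothesis $|D|\geq 2$ is used, since a single differing coordinate leaves no such entry to exploit, consistent with the forward direction. Concretely I set $c_{ij} = -x_ix_j$ for $i,j \in D$ with $i\neq j$, all remaining off-diagonal entries $c_{ij} = 0$, and $c_j = -x_j$ for $j \notin D$ while $c_j = 0$ for $j \in D$. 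A direct substitution, using $x_i^2 = 1$ throughout and $x_i' = -x_i$ for $i \in D$, then shows that both $\ell_j^x$ and $\ell_j^{x'}$ evaluate to $1$ for every $j \notin D$, and to $|D|-1 > 0$ for every $j \in D$. Hence this $(C,c)$ lies in $P_x \cap P_{x'}$.

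I expect the reverse direction to be the main obstacle, since it hinges on guessing the correct construction rather than on a routine verification. The principle that motivates the formula is that the shared symmetry group $\mathfrak{G}$ of \Cref{thm:qbp_exact} lets one reduce to the case $x = (1,\dots,1)$ via $P_x = g_x\cdot P_{(1,\dots,1)}$; in that normalized case the natural choice ``place $-1$ on the off-diagonal entries indexed by $D$ and nothing else'' is transparent, and translating it back through the group action yields the general formula above. Once the construction is written down, checking feasibility is a short computation.
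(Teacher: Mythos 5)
Your proof is correct and takes essentially the same approach as the paper: the forward direction is the identical observation that a single differing coordinate $k$ makes the $k$-th inequalities of $P_x$ and $P_{x'}$ exact negatives of each other, and your explicit feasible point in the reverse direction coincides with the one the paper produces via its sign-matching argument (the only difference being that the paper also sets $c_{ij} = -x_i x_j$ when both $i,j$ lie outside $D$, whereas you zero those entries — both choices work). Your explicit handling of the degenerate case $x = x'$ is a small improvement over the paper, which leaves that case implicit.
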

\begin{proof}
$(\Rightarrow) : $ Assume $P_x \cap P_{x'} \neq \emptyset$ for some $x,x' \in \pmOne^n$. If $x,x'$ only differed the $i$th component, the $i$th inequality of $P_{x}$ is of the form $\ell >0$ and the $i$th inequality of $P_{x'}$ is of the form $- \ell >0$. Clearly both can not be true, so $x$ and $x'$ must differ in at least two places. \\
$(\Leftarrow): $ Assume that $x$ and $x'$ differ in at least two places, we want to show there exists some $(C,c) \in P_x \cap P_{x'}$. Let $P_x = \{(C,c) \ : \ \ell_1(x) >0, \ldots, \ell_n(x) >0 \}$ and $P_{x'} = \{(C,c) \ : \ \ell_1(x') >0, \ldots, \ell_n(x')>0 \}$ where
\begin{align*}
 \ell_i(x) = -\frac{1}{x_i} (\ck_i + \sum_{\substack{j = 1\\ j \neq i}}^n \cij_{ij}x_j).
\end{align*}
Observe that $\ell_i(x) \neq - \ell_i(x')$ for any $i \in [n]$ since $x$ differs from $x'$ in more than one place. Also, observe that for $\ell_i(x)$ and $\ell_j(x)$ the coefficient in front of $\cij_{ij}$ is $-\frac{x_j}{x_i}$ and $- \frac{x_i}{x_j}$ respectively. Therefore, for any $x \in \pmOne^n$, the coefficient of $\cij_{ij}$ is the same in both $\ell_i(x)$ and $\ell_j(x)$. Since $\ell_i(x) \neq - \ell_i(x')$ for any $i \in [n]$, then for each $i$ there exists at least one $ s \in \{\ck_i, \cij_{i1},\ldots, \cij_{in} \}$ such that the coefficient of $s$ in both $\ell_i(x)$ and $\ell_i(x')$ is the same. Moreover, if $s = \cij_{ij}$ for some $j \in [n]$, then the coefficient of $\cij_{ij}$ in $\ell_j(x), \ell_j(x'), \ell_i(x)$ and $\ell_i(x')$ is the same.

With this in mind, consider $\ck_i, \cij_{ij} = 0$ if the coefficients of $\ck_i$ and $\cij_{ij}$ have different signs in $\ell_i(x)$ and $\ell_i(x')$ and take $\ck_i = \sign(- \frac{1}{x_i})$ and $\cij_{ij} = \sign(- \frac{x_j}{x_i}) $ otherwise. Then $\ell_i(x) >0$ and $\ell_i(x')>0$ for all $i \in [n]$, giving  $(C,\ck) \in P_x \cap P_{x'}$.

\end{proof}

\Cref{prop:nontrivial_Px_intersect} shows that if $(C,\ck) \in P_x$ for some $x \in \pmOne^n$, then $(C,c)$ could potentially be in $P_{x'}$ for $2^n - n - 1$ other $x' \in \pmOne^n$. While this seems then like knowing $(C,\ck) \in P_x$ doesn't give much information, in all computations we observed that $(C,\ck) \in P_x$ for at most $n$ distinct $x \in \pmOne^n$.

\begin{conj}
$(C,\ck) \in P_x$ for at most $n$ distinct $x \in \pmOne^n$.
\end{conj}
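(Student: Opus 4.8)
The plan is to first recognize $P_x$ geometrically and thereby reduce the conjecture to a statement about local minima on the hypercube. Multiplying the $k$-th defining inequality of $P_x$ by $x_k^2 = 1$ shows that $(C,c) \in P_x$ if and only if $x_k\, g_k(x) < 0$ for every $k \in [n]$, where $g_k(x) = c_k + \sum_{i \neq k} c_{ik} x_i$. As observed in the proof of \Cref{prop:Px_global}, flipping the $k$-th coordinate of $x$ changes the objective $f(x) = x^T C x + 2c^T x$ by exactly $-4 x_k g_k(x)$; hence $(C,c) \in P_x$ precisely when $x$ is a \emph{strict local minimizer of $f$ over $\pmOne^n$ under single-coordinate flips} (equivalently, a sink of the acyclic orientation of the hypercube graph induced by $f$). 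The conjecture is thus equivalent to the assertion that a generic quadratic has at most $n$ strict local minima on $\pmOne^n$.

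The engine of the argument would be a pairwise separation lemma. Let $\bar{C}$ denote $C$ with its diagonal set to zero; since $x_i^2 = 1$ on $\pmOne^n$, replacing $C$ by $\bar{C}$ changes $f$ only by the additive constant $\Trace(C)$ and leaves all local minima unchanged. For any $x,y \in \pmOne^n$ the exact second-order identity gives $f(y) - f(x) = 2(\bar{C}x + c)^T(y-x) + (y-x)^T \bar{C}(y-x)$, and likewise with $x$ and $y$ interchanged. If both $x$ and $y$ are local minima, then each first-order term $2(\bar{C}x + c)^T(y-x) = -4\sum_{k : x_k \neq y_k} x_k g_k(x)$ is \emph{strictly positive}. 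Adding the two identities, the left-hand side is $0$ while the right-hand side is a strictly positive quantity plus $2(x-y)^T \bar{C}(x-y)$, forcing
\[ (x-y)^T \bar{C}\,(x-y) < 0 \qquad \text{for any two distinct local minima } x,y. \]
Writing $S = \{k : x_k \neq y_k\}$, this is exactly $x_S^T \bar{C}_{SS}\, x_S < 0$: a negativity condition on the principal submatrix of $\bar{C}$ indexed by the coordinates on which the two minima disagree.

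It remains to convert this pairwise negativity into the bound $n$. My first attempt would be linear-algebraic: collect the minima $x^{(1)},\dots,x^{(M)}$ into a matrix $X$ with these rows, and study the Gram matrix $Q = X \bar{C} X^T$, whose inertia is controlled by that of $\bar{C}$, together with the ``negative distance matrix'' $R_{ij} = Q_{ii} + Q_{jj} - 2Q_{ij}$, which has zero diagonal and strictly negative off-diagonal entries. One would like to bound $M$ by exploiting that $\bar{C}$ has zero trace — so it is indefinite, with at most $n-1$ negative eigenvalues — together with the fact that the rows of $X$ are $\pm 1$ vectors. The main obstacle, and the reason this is stated only as a conjecture, is that the quadratic condition $(x-y)^T \bar{C}(x-y) < 0$ does \emph{not} by itself bound $M$: in a form of signature $(p,q)$ one can find arbitrarily many points with pairwise negative differences (for instance, collinear ``timelike'' points), so the $\pmOne$-structure of the minima must enter essentially. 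A more promising route may be induction on $n$, splitting $\pmOne^n$ into the two facets $x_n = \pm 1$: each facet carries an $(n-1)$-variable quadratic with at most $n-1$ local minima by the inductive hypothesis, and the cross-facet instances of the pairwise lemma constrain how many facet-minima can survive the extra $x_n$-flip condition. Making this bookkeeping yield $n$ rather than the trivial bound $2(n-1)$ is the crux of the problem.
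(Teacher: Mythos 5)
You should note at the outset that the paper does not prove this statement: it is stated as a conjecture, supported only by numerical trials with Gaussian data, so there is no proof of record to compare yours against, and your inability to close the argument is consistent with the statement's status. That said, the parts of your proposal that are proved are correct. Multiplying the $k$-th inequality of $P_x$ by $x_k$ does show that $(C,c)\in P_x$ if and only if $x_k\bigl(c_k+\sum_{i\neq k}c_{ik}x_i\bigr)<0$ for all $k$, and by the flip identity computed in the proof of \Cref{prop:Px_global} this says exactly that $x$ is a strict local minimizer of $f(x)=x^TCx+2c^Tx$ under single-coordinate flips, i.e.\ a sink of the paper's directed graph $G$. Your pairwise lemma is also correct: adding the two exact second-order expansions for distinct local minima $x,y$ cancels the zeroth-order terms, both first-order terms are strictly positive, and one gets $(x-y)^T\bar{C}(x-y)<0$, i.e.\ $x_S^T\bar{C}_{SS}x_S<0$ on the disagreement set $S$.

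However, the gap you flag---converting pairwise negativity into the bound $n$---is not merely difficult; it cannot be closed, because the conjecture is false. Take $n=6$, $c=0$, and $C$ with $c_{12}=c_{34}=c_{56}=-1$ and all other off-diagonal entries zero. The six inequalities defining $P_x$ reduce to $x_1x_2>0$, $x_3x_4>0$, $x_5x_6>0$, so $(C,c)\in P_x$ for exactly the $2^3=8>6$ vectors with $x_1=x_2$, $x_3=x_4$, $x_5=x_6$. Since each $P_x$ is open, every sufficiently small perturbation of $(C,c)$ retains all eight memberships, and a generic such perturbation also makes all $2^n$ objective values distinct (each coincidence is a nontrivial hyperplane in $(C,c)$-space), so generic counterexamples exist. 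Pairing up coordinates in the same way produces $2^{\lfloor n/2\rfloor}$ distinct $x$ with $(C,c)\in P_x$, so no linear---indeed no polynomial---bound can hold; this is the familiar spin-glass phenomenon that a quadratic on the hypercube can have exponentially many one-flip-stable states, and the paper's ``at most $n$'' observation is an artifact of small $n$ and Gaussian sampling. Note that your pairwise lemma is perfectly consistent with this: in the example, two local minima differ on a union of pairs $\{2j-1,2j\}$, on which $\bar{C}$ is negative. So the honest conclusion of your (correct) reduction is the opposite of what was asked: it is the natural route to \emph{disproving} the conjecture, and the obstruction you identified at the end is structural, not technical.
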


\subsection{Comparing Algorithms}
We conclude by comparing \Cref{alg:Px} to the standard Shor relaxation for \eqref{QBP}. The intuition is that choosing $x$ such that $(C,c) \in P_x$ gives a good candidate for an optimal solution to \eqref{QBP}. First, note that any optimal solution to \eqref{QP-Relax} gives a lower bound on the optimal value of \eqref{QBP}. In contrast, any $x \in \pmOne^n$ gives an upper bound on the optimal value. Therefore, when comparing the two methods, it makes most sense to compare the range that these optimal solutions take. 

For fixed $(C,c) \in \symnr \times \mathbb{R}^n$, we let 
\[
p_{x} := x^T C x + 2c^T x
\] where $x$ is the output of
\Cref{alg:Px}. Let $p_{SDP}$ be the optimal value of the Shor relaxation of \eqref{QBP}. 
To compare $p_x$ and $p_{SDP}$, define 
$\Dpp:\mathbb{R}\times \mathbb{R}\to [0,\frac{1}{2}]$, 
\[ \Dpp(p_{x}, p_{SDP}) =  \frac{p_{x} - p_{SDP}}{\frac{1}{2}(|p_{x}| + |p_{SDP}|)}. \]
Note that 
$\Dpp(p_{x}, p_{SDP}) = 0$ if and only if $p_{x} = p_{SDP}$, so  only when they both give optimal solutions to \eqref{QBP}. 
On the other side, $\Dpp(p_{x}, p_{SDP}) = \frac{1}{2}$ if and only if $p_{x} = 0$ or $p_{SDP} = 0$. 
The numerator of $\Dpp(p_{x}, p_{SDP})$ gives the range the optimal values take whereas the denominator normalizes this range based on the magnitude of these values. 
Therefore, the closer $\Dpp(p_{x}, p_{SDP})$ is to zero, 
the tighter the interval $[p_x, p_{SDP}]$ is around true optimal value of \eqref{QBP}.

In addition, we can consider initializing \Cref{alg:Px} with $M$ random binary vectors instead of just one. This gives up to $M$ distinct $x$ such that $(C,c) \in P_x$. Denote this collection of $x$ as $V$. Then setting, $p^{(M)}_{x} = \min \{ x^T C x + 2c^T x \ : x \in V \}$ we see that {so long as $x \in V$,} $p^{(M)}_{x} \leq p_x$.

\begin{table}[h!]
    \centering
    \begin{tabular}{|c|c|c|c|c|c|c|}
    \hline 
        $n$ & $50$ & $100$ & $150$ & $200$ & $250$ & $300$ \\
        \hline 
        Time $p_x$ (sec) & $0.004$ & $0.028$ & $0.121$ & $0.240$ & $0.535$ & 0.913 \\
         Time  $p^{(M)}_{x}$ (sec)  & $0.014$ & $0.134$ & $0.664$ & $1.697$ & $3.790$ & $6.902$ \\
         Time  $p_{SDP}$ (sec)  & $0.396$ & $2.011$ & $5.621$ & $12.022$ & $21.122$ & $35.299$ \\
        $\Dpp(p_{x}, p_{SDP})$& $0.257$ & $0.267$ & $0.269$ & $0.274$ & $0.277$ & $0.280$\\
       $\Dpp(p^{(M)}_{x}, p_{SDP})$ & $0.186$ & $0.210$ & $0.220$ & $0.230$ & $0.238$ & $0.243$\\
        \hline 
    \end{tabular}
    \caption{Comparison of the time it took to find $p_x, p^{(M)}_{x}$ and  $p_{SDP}$ for $M = \lceil \log (n) \rceil $, and the range these values took in a trial of $1,000$ randomly generated $(C,c)$.}
    \label{tab:Px_sdp_stats}
\end{table}

We compare the time it took to find $p_x, p_x^{(M)}$ and $p_{SDP}$ and their values by considering $1000$ random instances of \eqref{QBP}. We run our simulations using \texttt{Julia 1.7} on a $2018$ Macbook Pro with 2.3 GHz Quad-Core Intel Core i5 processor. We use the package \texttt{SCS.jl} to solve all semidefinite programs. As before, we sample each $c_i,c_{ij}$ independently from a $\mathcal{N}(0,1)$ distribution. The results of these simulations are given in \Cref{tab:Px_sdp_stats}.  We see that finding $p_x$ and $p_x^{(M)}$  takes a fraction of the time it takes to solve \eqref{QP-Relax}. This shows that for large scale binary quadratic programs, \Cref{alg:Px} gives an efficient heuristic for finding feasible solutions close to the true global optimum. Also, we note that in none of our calculations \eqref{QP-Relax} was exact. In addition, determining the true global optimum for \eqref{QBP} is impractical for $n \geq 50$, so we are unable to evaluate which method gives an optimal solution closest to the true global optimum. These experiments and the results shown in this section motivate future work understanding the geometry and intersections of the cones $P_x$ and using this to inform efficient algorithms to find candidate solutions of \eqref{QBP}.

\subsection*{Acknowledgements} We thank Danielle Agostini for his insightful comments.
Research of Jose I. Rodriguez is supported by the Sloan Foundation and the Office of the Vice Chancellor for Research and Graduate Education at U.W. Madison with funding from the Wisconsin Alumni Research Foundation.

\bibliographystyle{plain}
\bibliography{refs.bib}

\begin{thebibliography}{10}

\bibitem{amendola2021solving}
Carlos {Améndola}, Julia {Lindberg}, and Jose~Israel {Rodriguez}.
\newblock Solving parameterized polynomial systems with decomposable
  projections.
\newblock {\em arXiv preprint arXiv:1612.08807}, 2021.

\bibitem{burer2020exact}
Samuel Burer and Yinyu Ye.
\newblock Exact semidefinite formulations for a class of (random and
  non-random) nonconvex quadratic programs.
\newblock {\em Math. Program.}, 181(1, Ser. A):1--17, 2020.

\bibitem{cifuentes2017on}
Diego Cifuentes, Sameer Agarwal, Pablo Parrilo, and Rekha Thomas.
\newblock On the local stability of semidefinite relaxations.
\newblock {\em Mathematical Programming}, 10 2017.

\bibitem{cifuentes2020the}
Diego Cifuentes, Corey Harris, and Bernd Sturmfels.
\newblock The geometry of {SDP}-exactness in quadratic optimization.
\newblock {\em Math. Program.}, 182(1-2, Ser. A):399--428, 2020.

\bibitem{flippo1996duality}
Olaf~E. Flippo and Benjamin Jansen.
\newblock Duality and sensitivity in nonconvex quadratic optimization over an
  ellipsoid.
\newblock {\em European Journal of Operational Research}, 94(1):167--178, 1996.

\bibitem{fujie1997semidefinite}
Tetsuya Fujie and Masakazu Kojima.
\newblock Semidefinite programming relaxation for nonconvex quadratic programs.
\newblock {\em J. Global Optim.}, 10(4):367--380, 1997.

\bibitem{goemans1995improved}
Michel~X. Goemans and David~P. Williamson.
\newblock Improved approximation algorithms for maximum cut and satisfiability
  problems using semidefinite programming.
\newblock {\em J. Assoc. Comput. Mach.}, 42(6):1115--1145, 1995.

\bibitem{khabbazibasmenj2014generalized}
Arash Khabbazibasmenj and Sergiy~A. Vorobyov.
\newblock Generalized quadratically constrained quadratic programming for
  signal processing.
\newblock In {\em 2014 IEEE International Conference on Acoustics, Speech and
  Signal Processing (ICASSP)}, pages 7629--7633, 2014.

\bibitem{kim2003exact}
Sunyoung Kim and Masakazu Kojima.
\newblock Exact solutions of some nonconvex quadratic optimization problems via
  {SDP} and {SOCP} relaxations.
\newblock {\em Comput. Optim. Appl.}, 26(2):143--154, 2003.

\bibitem{kochenberger2014the}
Gary Kochenberger, Jin-Kao Hao, Fred Glover, Mark Lewis, Zhipeng L\"{u}, Haibo
  Wang, and Yang Wang.
\newblock The unconstrained binary quadratic programming problem: a survey.
\newblock {\em J. Comb. Optim.}, 28(1):58--81, 2014.

\bibitem{lasserre2000global}
Jean~B. Lasserre.
\newblock Global optimization with polynomials and the problem of moments.
\newblock {\em SIAM J. Optim.}, 11(3):796--817, 2000/01.

\bibitem{laurent1995on}
Monique Laurent and Svatopluk Poljak.
\newblock On a positive semidefinite relaxation of the cut polytope.
\newblock volume 223/224, pages 439--461. 1995.
\newblock Special issue honoring Miroslav Fiedler and Vlastimil Pt\'{a}k.

\bibitem{lee2011mixed}
Jon Lee and Sven Leyffer.
\newblock {\em Mixed integer nonlinear programming}, volume 154.
\newblock Springer Science \& Business Media, 2011.

\bibitem{lindberg2021exploiting}
Julia {Lindberg}, Nigel {Boston}, and Bernard~C. {Lesieutre}.
\newblock Exploiting symmetry in the power flow equations using monodromy.
\newblock {\em ACM Communications in Computer Algebra}, 54(3):100--104, 2021.

\bibitem{LOCATELLI2015126}
Marco Locatelli.
\newblock Some results for quadratic problems with one or two quadratic
  constraints.
\newblock {\em Operations Research Letters}, 43(2):126--131, 2015.

\bibitem{luo2010semidefinite}
{Luo, Zhi-quan and Ma, Wing-kin and So, Anthony Man-Cho and Ye, Yinyu and
  Zhang, Shuzhong }.
\newblock Semidefinite relaxation of quadratic optimization problems.
\newblock {\em IEEE Signal Processing Magazine}, 27(3):20--34, 2010.

\bibitem{molzahn2019a}
Daniel~K. Molzahn and Ian~A. Hiskens.
\newblock A survey of relaxations and approximations of the power flow
  equations.
\newblock {\em Foundations and Trends in Electric Energy Systems},
  4(1-2):1--221, 2019.

\bibitem{nie2014optimality}
Jiawang Nie.
\newblock Optimality conditions and finite convergence of {L}asserre's
  hierarchy.
\newblock {\em Math. Program.}, 146(1-2, Ser. A):97--121, 2014.

\bibitem{papaspiliotopoulos2017a}
Vasileios~A. Papaspiliotopoulos, George~N. Korres, and Nicholas~G. Maratos.
\newblock A novel quadratically constrained quadratic programming method for
  optimal coordination of directional overcurrent relays.
\newblock {\em IEEE Transactions on Power Delivery}, 32(1):3--10, 2017.

\bibitem{parrilo2003semidefinite}
Pablo~A. Parrilo.
\newblock Semidefinite programming relaxations for semialgebraic problems.
\newblock volume~96, pages 293--320. 2003.
\newblock Algebraic and geometric methods in discrete optimization.

\bibitem{poljak1995a}
S.~Poljak, F.~Rendl, and H.~Wolkowicz.
\newblock A recipe for semidefinite relaxation for {$(0,1)$}-quadratic
  programming.
\newblock {\em J. Global Optim.}, 7(1):51--73, 1995.

\bibitem{riener2013exploiting}
Cordian Riener, Thorsten Theobald, Lina~Jansson Andr\'{e}n, and Jean~B.
  Lasserre.
\newblock Exploiting symmetries in {SDP}-relaxations for polynomial
  optimization.
\newblock {\em Math. Oper. Res.}, 38(1):122--141, 2013.

\bibitem{shor1987quadratic}
N.~Z. Shor.
\newblock Quadratic optimization problems.
\newblock {\em Izv. Akad. Nauk SSSR Tekhn. Kibernet.}, (1):128--139, 222, 1987.

\bibitem{sojoudi2014exactness}
Somayeh Sojoudi and Javad Lavaei.
\newblock Exactness of semidefinite relaxations for nonlinear optimization
  problems with underlying graph structure.
\newblock {\em SIAM J. Optim.}, 24(4):1746--1778, 2014.

\bibitem{tan2001the}
Peng~Hui Tan and L.K. Rasmussen.
\newblock The application of semidefinite programming for detection in {CDMA}.
\newblock {\em IEEE Journal on Selected Areas in Communications},
  19(8):1442--1449, 2001.

\bibitem{vavasis1990quadratic}
Stephen~A. Vavasis.
\newblock Quadratic programming is in {NP}.
\newblock {\em Inform. Process. Lett.}, 36(2):73--77, 1990.

\bibitem{wang2020on}
Alex~L. Wang and Fatma K{\i}l{\i}n{\c{c}}-Karzan.
\newblock On convex hulls of epigraphs of qcqps.
\newblock In Daniel Bienstock and Giacomo Zambelli, editors, {\em Integer
  Programming and Combinatorial Optimization}, pages 419--432, Cham, 2020.
  Springer International Publishing.

\bibitem{wang2021geometric}
Alex~L. Wang and Fatma Kilinc-Karzan.
\newblock A geometric view of sdp exactness in qcqps and its applications,
  2021.

\bibitem{wang2022tightness}
Alex~L Wang and Fatma K{\i}l{\i}n{\c{c}}-Karzan.
\newblock On the tightness of sdp relaxations of qcqps.
\newblock {\em Mathematical Programming}, 193(1):33--73, 2022.

\bibitem{wang2021tssos}
Jie Wang, Victor Magron, and Jean-Bernard Lasserre.
\newblock T{SSOS}: a moment-{SOS} hierarchy that exploits term sparsity.
\newblock {\em SIAM J. Optim.}, 31(1):30--58, 2021.

\bibitem{ugur2009convex}
Uğur Yildiran.
\newblock {Convex hull of two quadratic constraints is an LMI set}.
\newblock {\em IMA Journal of Mathematical Control and Information},
  26(4):417--450, 10 2009.

\bibitem{zhong2013dynamic}
Haiwang Zhong, Qing Xia, Yang Wang, and Chongqing Kang.
\newblock Dynamic economic dispatch considering transmission losses using
  quadratically constrained quadratic program method.
\newblock {\em IEEE Transactions on Power Systems}, 28(3):2232--2241, 2013.

\end{thebibliography}

\end{document}